\documentclass[10pt]{article}
\usepackage{graphicx}
\usepackage[a4paper, textwidth=16cm]{geometry}

\usepackage{amsthm,amsmath}
\usepackage{xcolor,paralist,hyperref,titlesec,fancyhdr,etoolbox}
\newtheorem{theorem}{Theorem}[]
\newtheorem{definition}[theorem]{Definition}
\newtheorem{lemma}[theorem]{Lemma}

\usepackage{amssymb}
\newtheorem{assumption}{Assumption}
\newtheorem{remark}{Remark}

\titleformat{\section}
  {\normalfont\Large\bfseries}
  {\thesection}
  {1em}
  {}
  
\titlespacing*{\section}{0pt}{0ex}{0ex}

\hypersetup{ colorlinks=true, linkcolor=black, filecolor=black, urlcolor=black, citecolor=black }

\usepackage{lipsum}

\begin{document}
\title{\textbf{The local Calder\'on problem and the determination at the boundary of a complex anisotropic admittivity}} 
\author{Jessica Crosse\thanks{Department of Mathematics and Statistics, University of Limerick, V94 T9PX, Limerick, Ireland. E-mail: jessica.crosse@ul.ie}
\and Romina Gaburro\thanks{Department of Mathematics and Statistics, University of Limerick, Health Research Institute (HRI), V94 T9PX, Limerick, Ireland. E-mail: romina.gaburro@ul.ie}}
\date{}
\maketitle

\begin{abstract}
We address Calder\'on's problem of stably determining the anisotropic complex admittivity $\sigma$ in a domain $\Omega\subset\mathbb{R}^n$, with $n\geq3$, representing a conducting medium, in terms of a Dirichlet-to-Neumann map locally prescribed on a non-empty portion $\Sigma$ of the boundary of $\Omega$, $\partial\Omega$. $\sigma$ is assumed to be of type $\sigma(\cdot)=A(\cdot,a(\cdot))$ in $\Omega$, where the one-parameter family of complex-symmetric matrices $[\lambda^{-1},\:\lambda]\ni t\mapsto A(\cdot,\: t)$ is assumed to be \textit{a-priori} known and the scalar function $a$ is unknown. We establish Lipschitz and H\"older stability estimates at the boundary for $\sigma$ and its derivatives of arbitrary order on $\Sigma$, respectively, in terms of the local map. 
\end{abstract} 

\textbf{Mathematical Subject Classifications (2010):} Primary: 35R30; Secondary: 35J25, 35J47.\\

\textbf{Key words:} Complex Calder\'on's problem, anisotropic admittivity, singular solutions.\\

\section{Introduction}
\label{sec:intro}
In this paper, we address the inverse problem of stably determining the complex anisotropic \textit{admittivity} $\sigma$ in a domain $\Omega\subset\mathbb{R}^n$, with $n\geq 3$, from the Dirichlet-to-Neumann map localised on a portion $\Sigma$ of the boundary of $\Omega$, $\partial\Omega$. If $\Omega$ is occupied by a medium having admittivity $\sigma$, in absence of internal sources, the electrostatic potential $u$ in $\Omega$ satisfies 
\begin{equation}
    \label{eqn: 1}
    \textrm{div}(\sigma\nabla u)=0 ,\quad\textrm{in}\quad\Omega,
\end{equation}
where the (possibly anisotropic) electric admittivity at frequency $k$ is given by the complex-symmetric matrix valued function 
\begin{equation}\label{complex sigma}
\sigma(x)=\sigma_R(x) + ik\sigma_I(x),\qquad x\in\Omega, 
\end{equation}
where $\sigma_R$ is the \textit{conductivity} of $\Omega$ and $\sigma_I$ is the \textit {permittivity} of $\Omega$. 

This problem, also known as Calder\'on's problem, or Electrical Impedance Tomography (EIT), arises in many different fields such as geophysics, medicine, and nondestructive testing of materials. When $k=0$ and $\sigma$ is real, the inverse problem is known as the inverse conductivity problem and its first mathematical formulation is due to the seminal paper of Calder\'{o}n \cite{C}, where he addressed the problem of whether the isotropic conductivity $\sigma = \gamma I$ can be uniquely determined by the Dirichlet-to-Neumann (D-N) map
$$
\Lambda_{\sigma}:\,{H}^{\frac{1}{2}}(\partial\Omega)\,\ni\,u\vert_{\partial\Omega}\,
\rightarrow\,{\sigma}\nabla{u}\cdot\nu\vert_{\partial\Omega}\in{H}^{-\frac{1}{2}}(\partial\Omega), 
$$
where $u$ solves \eqref{eqn: 1} and $\nu$ denotes the outward unit normal to the boundary $\partial\Omega$. We refer to \cite{A1, Koh-V1, Koh-V2, N, Sy-U} as seminal contributions to this inverse problem in the real case and to \cite{Bo, U} for an overview on it.



The complex case has been less studied to date and in this respect we recall, in the isotropic case, the two-dimensional uniqueness results of \cite{Bu, Fr}, and those of Lipschitz stability in \cite{BerFra11}. For the complex anisotropic case, we recall the recent global stability result \cite{FosGabSin25} for the case when $\sigma = a A$, with $A$ an \textit{a-priori} known real Lipschitz matrix-valued function and $a$ an unknown affine complex scalar function to be determined. Here we consider the case where $\sigma$ is a complex-symmetric matrix-valued function of type $\sigma = A(\cdot,a(\cdot))$, where $t\mapsto A(\cdot, t)$ is \textit{a-priori} known and the dependence of $A$ (or $\sigma$) on the scalar unknown function $a$ can be non-linear. 

We emphasize that allowing $\sigma$ in \eqref{eqn: 1} to be a complex-valued matrix, accounting for both the conductivity $\sigma_R$ and the permittivity $\sigma_I$ in $\Omega$ at a fixed frequency $k$, yields a more realistic model of the material’s electrical behavior than the purely real case $\sigma = \sigma_R$. In fact, both $\sigma_R$ and $\sigma_I$ are essential for accurately characterising the electrical material properties via externally applied electric fields. When current is injected into $\Omega$ through electrodes on $\partial\Omega$, an electrical field is generated and induces ionic motion, corresponding to conductivity. At the same time, under a fixed frequency $k$, ionic polarisation occurs, which is captured by the permittivity. For more on the appropriateness of the model \eqref{eqn: 1} having $\sigma$ complex, we refer to \cite{AdGabLio15} and \cite{FosGabSin25}.

The admittivity $\sigma$ considered in this paper is anisotropic. This is motivated by the fact that anisotropy is commonly present in nature, e.g., in the human body; in the theory of homogenization; as a result of deformation of an isotropic material. Moreover, many biological tissues, such as the breast, muscles, and brain, include microscopic fiber structures that provide anisotropic physical characteristics on a larger scale, see \cite{AdGabLio15}. 

Even in the real case, Tartar's observation (see \cite{Koh-V1}) showed that, in the general case, an anisotropic $\sigma$ is not uniquely determined by $\Lambda_{\sigma}$ (full measurements). In fact, if $\psi:\overline{\Omega}\longrightarrow\overline{\Omega}$ is a diffeomorphism that keeps the boundary $\partial\Omega$ fixed, i.e., $\psi\vert_{\partial\Omega} = I$, then $\sigma$ and its push-forward under $\psi$,
\[\psi^{*} = \frac{(D\psi)\sigma({D\psi})^T}{\det(D\psi)}\circ\psi^{-1}\]
produce the same D-N map. Since this observation, a line of research initiated by the seminal paper of Lee and Uhlmann \cite{Le-U}, has been that of investigating the determination of $\sigma$ modulo a change of variables that fixes the boundary (see \cite{La-U, La-U-T} and \cite{Astala2005,Sylvester1990, Sy-U} for the two-dimensional case). Another direction has been the one of assuming that the anisotropic conductivity $\sigma$ depends on a finite number of unknown spacially-dependent parameters that uniquely identify $\sigma$. In this direction, we refer to \cite{A, A-G1, A-G2, AGS, Koh-V1, Lio} and the line of research initiated in \cite{AleVes05}.

Here we address the issue of stability at the boundary $\partial\Omega$ for the inverse problem of determining the complex anisotropic admittivity $\sigma$ from a local D-N map, $\Lambda_{\sigma}^{\Sigma}$, localised on an open portion $\Sigma\subset\partial\Omega$. It is well established (see \cite{A}) that, even in the real isotropic case ($\sigma=\gamma I$, where $\gamma$ is a real-valued scalar function and $I$ represents the $n\times n$ identity matrix), the optimal stability of $\sigma$ in the interior of $\Omega$ with respect to $\Lambda_{\sigma}$ or its local version $\Lambda_{\sigma}^{\Sigma}$, is logarithmic (see also \cite{B-B-R, B-F-R, Liu}). On the other hand, Lipschitz stability in the interior can be restored if $\sigma$ is deemed to belong to a finite dimensional space (see \cite{AleVes05} and the subsequent developments in \cite{AleDeHGabSin17, BerFra11, FosGabSin21, FosGabSin25, G-S} and \cite{Garde2024}). 

Following this line of research, we consider a complex anisotropic admittivity of type $\sigma(x)=A\big(x,a(x)\big)$, $x\in\Omega$, where the one-parameter family of complex-symmetric matrices $[\lambda^{-1}, \lambda]\ni t\rightarrow A(\cdot, t)$ is assumed to be \textit{a-priori} known and satisfying the monotonicity assumption
\begin{equation}\label{monotonicity complex}
\Re\big(D_tA(\cdot,t)\big)\geq CI>0,
\end{equation}
for a positive constant $C$. $a(\cdot)$ is an unknown scalar function to be stably determined in terms of $\Lambda_{\sigma}^{\Sigma}$. The precise assumptions and formulation of the problem, are given in Section \ref{sec2}.

Within this setting, we stably determine $a$ and its derivatives $D^h a$ (hence $A$ and $D^hA$) of any arbitrary order $h$, $h\geq1$, on $\Sigma$, with respect to $\Lambda_{\sigma}^{\Sigma}$ (see Subsection \ref{sec2.2} for the precise formulation of the local map). Specifically, we show that, under suitable conditions, $A$ and $D^hA$, at the boundary $\partial\Omega$, depend upon $\Lambda_{\sigma}^\Sigma$ with a modulus of continuity of Lipschitz and H\"older type, respectively, provided that the frequency $k$ in \eqref{complex sigma} lies in an interval that explicitly depends on the \textit{a-priori} information (see Theorems \ref{thm 1}, \ref{der thm}). 

Our results rely on the existence of singular solutions to \eqref{eqn: 1} having an isolated singularity (of any arbitrary order) outside $\Omega$. The existence of such solutions was first proved in \cite{A} (see also \cite{I}) for the case when $\sigma$ is a real matrix-valued function belonging to $W^{1,p}(\Omega)$, with $p>n$. These solutions were employed in the same paper to establish a Lipschitz boundary stability estimate of a real anisotropic conductivity of type $\sigma=A(a)$ and a H\"older boundary stability estimate for its derivatives of any arbitrary order. These results were obtained within the setting of having $[\lambda^{-1}, \lambda]\ni t\rightarrow A(t)$ \textit{a-priori} known, with $a$ the unknown scalar function to be estimated, under a monotonicity condition on $A(\cdot)$ of the form
\[D_tA\geq CI>0,\]
for some positive constant C. These results were extended in \cite{A-G1, A-G2, G-L} to the more general real case of $\sigma=A(\cdot, a)$ in terms of global and local measurements and to the case when $\Omega$ is a manifold (with global measurements). 

Similar results were subsequently established in the context of the companion inverse problem of diffuse optical tomography (DOT), where \eqref{eqn: 1} is replaced by $-\textrm{div}(\sigma\nabla u)+qu=0$, for the case of $\sigma$ and $q$ real \cite{G}, and complex (see \cite{Cu-G-N, Cu-G-N-S, Do}). The common feature in \cite{Cu-G-N, Cu-G-N-S, Do} was the availability of an explicit formula displaying the dependence of $\sigma$ on $a$ (playing the important physiological role in DOT of the so-called absorption coefficient), making the inverse problem in DOT of establishing stability estimates of $\sigma$ and its derivatives at the boundary more tractable. We also refer to \cite{Br, KaYu, NaTa1, NaTa2, NaTa3}, where the issues of stability and reconstruction of real isotropic conductivities at the boundary was studied. For the real anisotropic case, we recall the more recent local boundary stability estimate of H\"older type in terms of $\Lambda_{\sigma}^{\Sigma}$, established in \cite{AGS}, in the case when $\sigma$ is assumed to be constant near $\Sigma\subset\partial\Omega$.

Here, we use the method of the singular solutions introduced in \cite{A} to prove a boundary Lipschitz stability estimate for a complex conductivity $\sigma$ of type $\sigma = A(\cdot, a)$ in  \eqref{eqn: 1} together with a boundary H\"older stability estimate of the derivatives of $\sigma$, of any arbitrary order, under suitable conditions on $A$ and $a$ that include that in \eqref{monotonicity complex}. As in \cite{Cu-G-N, Cu-G-N-S, Do}, the existence of singular solutions rely on the existence of the Green's function for the forward operator in \eqref{eqn: 1}. Since $\Re\sigma\geq CI>0$, the complex equation in \eqref{eqn: 1} can be treated as a two by two strongly elliptic system with real coefficients, which Green matrix's existence is guaranteed by the $W^{1,p}$- regularity of $\sigma$, with $p>n$ (see \cite[Section 3.1]{FosGabSin25}). Hence we show existence of singular solutions to \eqref{eqn: 1} having the same type of isolated singularities of those in \cite{Cu-G-N, Cu-G-N-S} but also having trace at the boundary compactly supported in $\Sigma$. This suits the fact that the D-N map $\Lambda_{\sigma}^{\Sigma}$ is localised to $\Sigma\subset\partial\Omega$ and allows, in turn, to establish our stability estimates locally on $\Sigma$, in terms of $\Lambda_{\sigma}^{\Sigma}$.

We improve upon the results obtained in \cite{Cu-G-N, Cu-G-N-S, Do} under the following aspects:
\begin{enumerate}
\item The \textit{a-priori} anisotropic structure of $\sigma$ considered here is more general.
\item The stability estimates are established in terms of local data (Theorems \ref{thm 1}, \ref{der thm}).
\item In Theorem \ref{der thm} an explicit interval of variability for the frequency $k$, in terms of the \textit{a-priori} data, is provided to allow for the boundary stable determination of the derivatives of $\sigma$  in terms of $\Lambda_{\sigma}^{\Sigma}$. This is based on an estimate from below for the gradient of the singular solutions of Theorem \ref{sing thm}, near the isolated singularity (Lemma \ref{Lem 4.1}), which is independent from the frequency $k$. In \cite{Do} an explicit interval for $k$ was provided to stably determine the boundary values of $\sigma$ in terms of the global measurements. In \cite{Cu-G-N, Cu-G-N-S}, the boundary stable determination of the derivatives of $\sigma$ was established under a low frequency regime, without quantifying the upper bound for $k$. 
\end{enumerate}

Our results represent the first step in an effort to prove stability for the complex anisotropic conductivity $\sigma=A(\cdot,a(\cdot))$, where $a$ could be piecewise affine in $\Omega$. Since the work of Kohn and Vogelius \cite{KV2, Koh-V1} and Alessandrini \cite{A1, A}, it is customary to treat the uniqueness and stability at the boundary, as a first step towards determination in the interior. It is in fact reasonable to expect that, likewise the stable determination of a piecewise affine $a$ on a given partition of $\Omega$ in the complex anisotropic conductivity $\sigma=aA$ treated in \cite{FosGabSin21}, the proof of a global stability estimate for the case $\sigma=A(\cdot,a)$ treated here, would be based on an iterative determination of boundary values and normal derivatives of $a$ (hence of $\sigma$) at the various interfaces of the domain partition, where $a$ is assumed to be piecewise affine. 

The paper is organised as follows. In Section \ref{sec2} we rigorously formulate the problem and state the main results (Theorems \ref{thm 1}, \ref{der thm}) of Lipschitz stability of $\sigma$ and H\"older stability of $D^h\sigma$ on $\Sigma$ in terms of $\Lambda_{\sigma}^{\Sigma}$, which is rigorously defined in Subsection \ref{sec2.2}. Section \ref{sec3} is devoted to proving existence of singular solutions of \eqref{eqn: 1} having an isolated singularity of any arbitrary order and trace compactly supported in $\Sigma$ (Theorem \ref{sing thm}). The proofs of Theorems \ref{thm 1}, \ref{der thm} are presented in Section \ref{sec4}. This section also contains the estimate from below of the gradient of the singular solutions constructed in Theorem \ref{sing thm} which is independent from $k$ (Lemma \ref{Lem 4.1}), a crucial step to the proof of Theorem \ref{der thm}.


\section{Formulation of the problem}\label{sec2}


\subsection{Main assumptions}\label{sec2.1}
We rigorously formulate the problem by introducing the following notation, definitions and assumptions. For $n\geq3$, a point $x\in\mathbb{R}^n$ will be denoted by $x=(x',x_n)$, where $x'\in\mathbb{R}^{n-1}$ and $x^n\in\mathbb{R}$. Moreover, given a point $x\in\mathbb{R}^n$, we will denote with $B_r(x)$, $B'_r(x')$ the open balls in $\mathbb{R}^n$, $\mathbb{R}^{n-1}$, centred at $x$ and $x'$, respectively, with radius $r$ and by $Q_r(x)$ the cylinder
\[Q_r(x)=B'_r(x')\times (x_n-r,x_n+r).\]
For any $v,w\in\mathbb{C}^n$, with $v=(v_1,...,v_n),\;w=(w_1,...,w_n)$, we will denote by $v\cdot w$, the expression
\[v\cdot w=\sum_{i=1}^n v_iw_i.\]
\begin{definition} \label{def: lips bound}
    We shall say that the boundary of $\Omega$, denoted by $\partial\Omega$, is of Lipschitz class with constants $r_0, L>0$, if for any $P\in\partial\Omega$, there exists a rigid transformation of coordinates under which we have $P=0$ and 
    \[\Omega\cap Q_{r_0}(0)=\big\{(x',x_n)\in Q_{r_0}(0)\;\big|\;x_n>\varphi(x')\big\},\]
    where $\varphi$ is a Lipschitz function on $B'_{r0}(0)$ satisfying
    \[\varphi(0)=0\] and \[\Vert\varphi\Vert_{C^{0,1}(B'_{r_0}(0))}\leq Lr_0.\]
\end{definition} 
Throughout the entire manuscript, $\Omega$ is a bounded domain in $\mathbb{R}^n$, $n\geq 3$, with Lipschitz boundary with constants $L$, $r_0$ as per Definition \ref{def: lips bound}. 
\\We consider, for a fixed frequency $k>0$, the one-parameter family of complex matrix-valued functions
\begin{align}
    \label{eqn: A def}
    A\big(x,t\big)=A_R\big(x,t\big)+ik A_I\big(x,t\big),\quad&\textrm{for any}\quad x\in\Omega,\quad\textrm{for any}\quad t\in[\lambda^{-1},\lambda].
\end{align}
Here $A_R$, $A_I\in L^{\infty}(\Omega\times[\lambda^{-1},\lambda], Sym_n)$ are real matrix-valued functions where $Sym_n$ denotes the class of $n\times n$ real-valued symmetric matrices. We assume that $A_R$, $A_I$ commute. Moreover, throughout the entire manuscript we fix a real number $p>n$.
\begin{definition}\label{A in H}
    Given positive constants $\lambda,\mathcal{E}_1,\mathcal{E}_2,E,\mathcal{D}$, we say that $A(\cdot,\cdot)\in\mathcal{H}$ if the following conditions hold:
    \begin{align}
        \label{eqn: A in W}
        A_R\in W^{1,p}(\Omega\times[\lambda^{-1},\lambda],Sym_n),\nonumber\\
        A_I\in W^{1,p}(\Omega\times[\lambda^{-1},\lambda],Sym_n),
    \end{align}
    \begin{align}
        \label{eqn: DtA in W}
        D_tA_R\in W^{1,p}(\Omega\times[\lambda^{-1},\lambda],Sym_n),\nonumber\\
        D_tA_I\in W^{1,p}(\Omega\times[\lambda^{-1},\lambda],Sym_n),
    \end{align}
    \begin{align}
        \label{eqn: bound on norms of A}
        supess_{t\in[\lambda^{-1},\lambda]} \Big(&\Vert A(\cdot,t)\Vert_{L^p(\Omega)}+\Vert D_xA(\cdot,t)\Vert_{L^p(\Omega)}\nonumber\\
        &+\Vert D_tA(\cdot,t)\Vert_{L^p(\Omega)} +\Vert D_tD_xA(\cdot,t)\Vert_{L^p(\Omega)}\Big)\leq E,
    \end{align}
\begin{align}
 \label{eqn: mono}
     \Re\big(D_tA(x,t)\xi\cdot\xi\big)\geq \mathcal{D}^{-1}|\xi|^2,\quad&\textrm{for a.e.} \quad x\in\Omega,\nonumber\\
     \quad&\textrm{for every}\quad t\in[\lambda^{-1},\lambda],\quad\xi\in\mathbb{C}^n,
 \end{align}
 where, for $z\in\mathbb{C}$, $\Re(z)$ and $\Im(z)$ denote the real and imaginary part of $z$, respectively.
 \\We also assume that
  \begin{align}
     \label{eqn: AR assump}
     \mathcal{E}_1^{-1}|\xi|^2\leq A_R(x,t)\xi\cdot\xi\leq\mathcal{E}_1|\xi|^2,\quad&\textrm{for a.e.}\quad x\in\Omega,\nonumber\\
     \quad&\textrm{for every}\quad t\in[\lambda^{-1},\lambda],\quad\xi\in\mathbb{R}^n,
 \end{align}
  and $A_I$ satisfies either
 \begin{align}
     \label{eqn: AI1 assump}
     \mathcal{E}_2^{-1}|\xi|^2\leq A_I(x,t)\xi\cdot\xi\leq\mathcal{E}_2|\xi|^2,\quad&\textrm{for a.e.}\quad x\in\Omega,\nonumber\\
     \quad&\textrm{for every}\quad t\in[\lambda^{-1},\lambda],\quad\xi\in\mathbb{R}^n,
 \end{align}
 or
 \begin{align}
     \label{eqn: AI2 assump}
     -\mathcal{E}_2|\xi|^2\leq A_I(x,t)\xi\cdot\xi\leq-\mathcal{E}_2^{-1}|\xi|^2,\quad&\textrm{for a.e.}\quad x\in\Omega,\nonumber\\
     \quad&\textrm{for every}\quad t\in[\lambda^{-1},\lambda],\quad\xi\in\mathbb{R}^n.
 \end{align}
  We observe that \eqref{eqn: mono} is a condition of monotonicity for $A$ with respect to the variable $t$.
\end{definition}
We state below some facts which are straightforward consequences of Definition \ref{A in H}. As $A_R$ and $A_I$ commute, the real and imaginary parts of $A^{-1}(\cdot,\cdot)$, denoted by $A^{-1}_R(\cdot,\cdot)$ and $A_I^{-1}(\cdot,\cdot)$ respectively, are the symmetric, real matrix-valued functions on $\Omega\times[\lambda^{-1},\lambda]$ defined by
\begin{align}
A^{-1}_R(\cdot,\cdot)=&A_R(\cdot,\cdot)\big(A_R^2(\cdot,\cdot)+k^2A_I^2(\cdot,\cdot)\big)^{-1},\\
    A^{-1}_I(\cdot,\cdot)=&-k A_I(\cdot,\cdot)\big(A_R^2(\cdot,\cdot)+k^2A_I^2(\cdot,\cdot)\big)^{-1}.
\end{align}
As an immediate consequence of Definition \ref{A in H}, for every $\xi\in\mathbb{R}^n$,
\begin{equation}
 \label{eqn: Re A inv bound}
     \mathcal{E}_1^{-1}(\mathcal{E}_1^2+k^2\mathcal{E}_2^2)^{-1}|\xi|^2\leq A_R^{-1}(\cdot,\cdot)\xi\cdot\xi\leq\mathcal{E}_1(\mathcal{E}_1^{-2}+k^2\mathcal{E}_2^{-2})^{-1}|\xi|^2,\quad\textrm{on}\quad\Omega\times[\lambda^{-1},\lambda],
 \end{equation}
 and if $A_I(\cdot,\cdot)$ satisfies \eqref{eqn: AI1 assump}, then $A_I^{-1}$ is uniformly negative definite and satisfies
 \begin{equation}
 \label{eqn: AI inv bound1}
k\mathcal{E}_2(\mathcal{E}_1^{-2}\!+\!k^2\mathcal{E}_2^{-2})^{-1}|\xi|^2\leq A_I^{-1}(\cdot,\cdot)\xi\cdot\xi\leq -k\mathcal{E}_2^{-1}(\mathcal{E}_1^{2}\!+\!k^2\mathcal{E}_2^{2})^{-1}|\xi|^2,\quad\textrm{on}\quad\Omega\times[\lambda^{-1},\lambda],
 \end{equation}
 otherwise, if $A_I$ satisfies \eqref{eqn: AI2 assump}, then $A_I^{-1}$ is uniformly positive definite and satisfies
 \begin{equation}
    \label{eqn: AI inv bound2}
    k\mathcal{E}_2^{-1}(\mathcal{E}_1^2+k^2\mathcal{E}_2^2)^{-1}|\xi|^2\leq A_I^{-1}(\cdot,\cdot)\xi\cdot\xi\leq k\mathcal{E}_2(\mathcal{E}_1^{-2}+k^2\mathcal{E}_2^{-2})^{-1}|\xi|^2,\quad\textrm{on}\quad\Omega\times[\lambda^{-1},\lambda].
  \end{equation} 
Definition \ref{A in H} also implies that $A(\cdot,\cdot)$ satisfies the boundness condition
  \begin{align}
    \label{eqn: bound cond1}   |A_R|^2+|kA_I|^2\leq\mathcal{E}_1^2+k^2\mathcal{E}_2^2,\quad&\textrm{on}\quad\Omega\times[\lambda^{-1},\lambda].
  \end{align}
\begin{assumption}\label{assump on ai} 
We assume that $a\in L^\infty(\Omega)$ is a scalar function satisfying
    \begin{eqnarray}
      & &  \lambda^{-1}\leq a(x)\leq\lambda,\quad\textrm{for a.e.}\quad x\in\Omega,\label{eqn: a ass1}\\
  & &      \Vert a\Vert_{W^{1,p}(\Omega)}\leq \mathcal{F},\label{eqn: a ass2}
    \end{eqnarray}
    where $\mathcal{F}$ is a positive constant.
\end{assumption}
By denoting $u=u_1+iu_2$, the complex equation 
\begin{equation}\label{operator L}
    Lu=\textrm{div}\Big(A\big(\cdot,a(\cdot)\big)\nabla u(\cdot)\Big)=0,\quad \textrm{in}\quad\Omega,
\end{equation}
is equivalent to the system
\begin{equation}
    \label{eqn: system}
    \begin{cases}
        \textrm{div}\Big(A_R\big(\cdot,a(\cdot)\big)\nabla u_1(\cdot)\Big)-k \textrm{div}\Big(A_I\big(\cdot,a(\cdot)\big)\nabla u_2(\cdot)\Big)=0,\quad\textrm{in}\quad\Omega,\\
        k \textrm{div}\Big(A_I\big(\cdot,a(\cdot)\big)\nabla u_1(\cdot)\Big)+\textrm{div}\Big(A_R\big(\cdot,a(\cdot)\big)\nabla u_2(\cdot)\Big)=0,\quad\textrm{in}\quad\Omega,
    \end{cases}
\end{equation}
which can be written in compact form as
\begin{equation}
    \textrm{div}\Big(\mathcal{C}\big(\cdot,a(\cdot)\big)\nabla \mathbf{u}(\cdot)\Big)=0,\quad\textrm{in}\quad\Omega,
\end{equation}
where $\mathbf{u}=(u_1,u_2)^T$, and $\mathcal{C}$ is defined as
\begin{equation}
    \mathcal{C}\big(\cdot,a(\cdot)\big)=\begin{pmatrix}
A_R\big(\cdot,a(\cdot)\big) & -k A_I\big(\cdot,a(\cdot)\big)\\
k A_I\big(\cdot,a(\cdot)\big) & A_R\big(\cdot,a(\cdot)\big)
\end{pmatrix}.
\end{equation}
Observing that for $\xi=(\xi_1,\xi_2)^T\in\mathbb{R}^{2n}$, using the symmetry of $A_I$, we have
\begin{equation}\label{eqn: C dot}
\mathcal{C}\big(\cdot,a(\cdot)\big)\xi\cdot\xi=A_R\big(\cdot,a(\cdot)\big)\xi_1\cdot\xi_1+A_R\big(\cdot,a(\cdot)\big)\xi_2\cdot\xi_2.
\end{equation}
\eqref{eqn: AR assump}, together with \eqref{eqn: C dot} imply that the system \eqref{eqn: system} is uniformly elliptic and bounded, therefore it satisfies the strong ellipticity condition
\begin{equation}\label{eqn: C elliptic}
    C_1^{-1}|\xi|^2\leq \mathcal{C}\big(x,a(x)\big)\xi\cdot\xi\leq C_1|\xi|^2,\quad\textrm{for a.e}\quad x\in\Omega,\quad\textrm{for every}\quad \xi\in\mathbb{R}^{2n},
\end{equation}
where $C_1>0$ is a constant depending on $\mathcal{E}_1$.
\begin{definition}\label{def: local subsets}
Let $\Sigma$ be an open portion of $\partial\Omega$ and let $\partial\Sigma$ denote the boundary of $\Sigma$. For every $\eta$, $0<\eta<r_0$, where $r_0$ has been introduced in Definition \ref{def: lips bound}, we denote
    \begin{eqnarray}
        \Sigma_\eta &=&\big\{x\in\Sigma\;|\,\textrm{dist}(x, \partial\Sigma)>\eta\big\},\\
        U_\eta &=&\Big\{x\in\mathbb{R}^n\;|\;\textrm{dist}(x,\Sigma_\eta)<\frac{\eta}{4}\Big\},\\
        U_\eta^i &=&U_\eta\cap\Omega.
    \end{eqnarray}
\end{definition}
\begin{remark}\label{remark eta0}
    Given $\Sigma$ is open and non-empty, there exists $\eta_0$ such that for $0<\eta_0<r_0$, $\Sigma_{\eta_0}$ is always non-empty. From now on we shall only consider values of $\eta$ below $\eta_0$. We emphasize that $\eta_0$ is dependent on the choice of $\Sigma\subset\partial\Omega$. Therefore as $\Sigma$ becomes narrower, $\eta_0$ tends to 0 and the stability estimates worsen.
\end{remark}
\begin{definition}\label{apriori data}
    We will refer to the set of positive numbers $r_0, L, \lambda,\mathcal{E}_1, \mathcal{E}_2, E,\mathcal{D},$ $\mathcal{F}, \eta$ and $\eta_0$ introduced above, along with the space dimension $n,p>n$, the frequency $k$ and the diameter of $\Omega$, $diam(\Omega)$ as the \textit{a-priori} data.
\end{definition}


\subsection{The Dirichlet-to-Neumann map}\label{sec2.2}
Let $A\in\mathcal{H}$ and $a$ satisfies Assumption \ref{assump on ai} and let us define the following subspace of $H^\frac{1}{2}(\partial\Omega)$,
    \begin{equation}
        H^\frac{1}{2}_{co}(\Sigma)=\big\{f\in H^\frac{1}{2}(\partial\Omega)\;|\; \textrm{supp} f\subset\Sigma\big\},
    \end{equation}
    together with its closure in the $H^{\frac{1}{2}}(\partial\Omega)$-norm,
    \begin{equation}
        H_{00}^{\frac{1}{2}}(\Sigma)=\overline{H^\frac{1}{2}_{co}(\Sigma)}.
    \end{equation}
    Next, we rigorously define the Dirichlet-to-Neumann map localised to $\Sigma$. To emphasise its dependence on $a$ and $\Sigma$, we will denote it with $\Lambda^{\Sigma}_a$.
\begin{definition}
    The  local Dirichlet-to-Neumann map corresponding to $A\big(\cdot,a(\cdot)\big)$ and $\Sigma$ is the operator
\[\Lambda_a^\Sigma:H^{\frac{1}{2}}_{00}(\Sigma)\rightarrow (H^{\frac{1}{2}}_{00}(\Sigma))^\ast\subset H^{-\frac{1}{2}}(\partial\Omega),\]
defined by
\begin{equation}
    \label{eqn: DN map}
    \big\langle\Lambda_a^\Sigma f,\overline{g}\big\rangle=\int_\Omega A\big(x,a(x)\big)\nabla u(x)\cdot\nabla \psi(x)\;dx,
\end{equation}
for any $f,g\in H^{\frac{1}{2}}_{00}(\Sigma)$, where $u\in H^1(\Omega)$ is the weak solution to
\begin{equation*}
    \begin{cases}
        \textrm{div}\big(A\big(\cdot,a(\cdot)\big)\nabla u(\cdot)\big)=0, &\quad\textrm{in}\quad\Omega,\\
        u=f,&\quad\textrm{on}\quad\partial\Omega,
    \end{cases}
\end{equation*}
and $\psi\in H^1(\Omega)$ is any function such that $\psi|_{\partial\Omega}=g$ in the trace sense.
\end{definition}
We denote by $\langle\cdot,\cdot\rangle$ the $L^2(\partial\Omega)$-pairing between $H^{\frac{1}{2}}_{00}(\Sigma)$ and its dual $\big(H^{\frac{1}{2}}_{00}(\Sigma)\big)^\ast$.
\\Given $A\big(\cdot,\cdot\big)\in\mathcal{H}$ and $a_i$ satisfying Assumption \ref{assump on ai}, for $i=1,2$, the well-known Alessandrini's identity (see \cite{A})
\begin{equation}
    \label{eqn: Aless I}
    \Big\langle\big(\Lambda_{a_1}^\Sigma-\Lambda_{a_2}^\Sigma\big)f_1,\overline{f_2}\Big\rangle=\int_\Omega\Big(A\big(x,a_1(x)\big)-A\big(x,a_2(x)\big)\Big)\nabla u_1(x)\cdot\nabla u_2(x)\;dx
\end{equation}
holds true for any $f_i\in H^{\frac{1}{2}}_{00}(\Sigma)$, where $u_i\in H^1(\Omega)$ is the unique weak solution to the Dirichlet problem
\begin{equation*}
    \begin{cases}
        \textrm{div}\Big(A\big(\cdot,a_i(\cdot)\big)\nabla u_i(\cdot)\Big)=0,&\quad\textrm{in}\quad\Omega,\\
        u_i=f_i,& \quad\textrm{on}\quad\partial\Omega,
    \end{cases}
\end{equation*}
for $i=1,2$ respectively (see \cite{BerFra11} for example). We will denote 
\begin{equation}\label{eqn: DN norm def}
        \Vert\Lambda_{a}^\Sigma\Vert_\ast=\sup\Big\{|\langle\Lambda_{a}^\Sigma f,\overline{g}\rangle|\quad\Big|\; f,g\in H^{1/2}_{00}(\Sigma),\; \Vert f\Vert_{H^{1/2}_{00}(\Sigma)}=\Vert g\Vert_{H^{1/2}_{00}(\Sigma)}=1\Big\}
    \end{equation}
to be the norm on the Banach space of bounded linear operators between $H^{\frac{1}{2}}_{00}(\Sigma)$ and $\big(H^{\frac{1}{2}}_{00}(\Sigma)\big)^\ast$.


\subsection{Main result}\label{sec2.3}


\begin{theorem}[Local Lipschitz stability of boundary values]\label{thm 1}
   Let $\Omega$ be a bounded domain with Lipschitz constants $L,r_0$. Let $\Sigma$ be the open portion of $\partial\Omega$ with $\eta_0(\Sigma)>0$ introduced in Definition \ref{def: local subsets}. Assume that $k$ satisfies
    \begin{equation}
        \label{eqn: k1}
        0\!<k\!\leq\!\!\underset{\mathcal{A}+\mathcal{B}+\mathcal{C}=1}{\max}\!\Bigg\{\!\min \!\Bigg\{\frac{(m^3-m^{-3})\tan(\frac{\mathcal{A}\pi}{4})}{M^3-M^{-3}}\!, \; M^{-6}\tan\bigg(\frac{\mathcal{B}\pi}{2n}\bigg)\!,\; M^{-6}\tan\bigg(\frac{\mathcal{C}\pi}{2n}\bigg)\Bigg\}\!\Bigg\},
    \end{equation}
   where $M=\max\{\mathcal{E}_1,\mathcal{E}_2\}$, $m=\min\{\mathcal{E}_1,\mathcal{E}_2\}$, and $\mathcal{E}_1$, $\mathcal{E}_2$ have been introduced in Definition \ref{A in H}. If $A\in\mathcal{H}$ and $a_i$ is a real-valued function satisfying Assumption \ref{assump on ai}, for $i=1,2$, then we have
\begin{equation}
    \label{eqn: Stab result}
    \big\Vert A\big(x,a_1(x)\big)-A\big(x,a_2(x)\big)\big\Vert_{L^\infty(\overline{\Sigma}_\eta)}\leq C\big\Vert\Lambda_{a_1}^\Sigma-\Lambda_{a_2}^\Sigma\big\Vert_\ast,
\end{equation}
where $C$ is a positive constant which depends on the \textit{a-priori} data only.
\end{theorem}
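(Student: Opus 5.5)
We follow Alessandrini's method of singular solutions, adapted to the complex admittivity and to local data. Fix a point $x_0\in\overline{\Sigma}_\eta$ at which $|a_1-a_2|$ (equivalently $\|A(\cdot,a_1)-A(\cdot,a_2)\|$) nearly attains its sup on $\overline{\Sigma}_\eta$. We first reduce to estimating $a_1-a_2$. Write $A(x,a_1)-A(x,a_2)=\int_0^1 D_tA(x,a_2+s(a_1-a_2))\,ds\,(a_1-a_2)=:H(x)(a_1-a_2)(x)$. The monotonicity \eqref{eqn: mono} gives $\Re(H\xi\cdot\xi)\ge \mathcal{D}^{-1}|\xi|^2$, and \eqref{eqn: bound on norms of A} together with Sobolev embedding ($p>n$) gives $\|A(\cdot,a_1)-A(\cdot,a_2)\|\le C|a_1-a_2|$. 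Hence it suffices to prove $|a_1-a_2|(x_0)\le C\|\Lambda_{a_1}^\Sigma-\Lambda_{a_2}^\Sigma\|_\ast$. Since $a_i\in W^{1,p}\subset C^{0,\alpha}$, all functions involved are Hölder continuous up to the boundary near $\Sigma$.

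Let $\nu$ be the outer normal at $x_0$ (in the Lipschitz graph coordinates of Definition \ref{def: lips bound}), and set $y_r=x_0+r\nu$ for small $r>0$, so that $y_r\in U_\eta\setminus\overline\Omega$. Using the singular solutions of Theorem \ref{sing thm} (stated in Section \ref{sec3}), built from the Green matrix of the $2\times 2$ real system $\mathcal{C}$, we take $u_i$ solving $\mathrm{div}(A(\cdot,a_i)\nabla u_i)=0$ in $\Omega$ with singularity at $y_r$, trace compactly supported in $\Sigma$, and the asymptotics $u_i(x)=\Gamma_i(x,y_r)+O(|x-y_r|^{2-n+\alpha})$, $\nabla u_i=\nabla\Gamma_i+O(|x-y_r|^{1-n+\alpha})$. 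Here $\Gamma_i$ is the fundamental solution of the constant complex-coefficient operator $\mathrm{div}(A(x_0,a_i(x_0))\nabla\cdot)$, with $\Gamma_i(x,y)\sim c_i\,(A_i^{-1}(x-y)\cdot(x-y))^{(2-n)/2}$, where $A_i:=A(x_0,a_i(x_0))$ and the power uses the principal branch. Inserting $u_1,u_2$ into Alessandrini's identity \eqref{eqn: Aless I} gives
\[
\Big|\int_\Omega H(a_1-a_2)\nabla u_1\cdot\nabla u_2\Big|\le \|\Lambda_{a_1}^\Sigma-\Lambda_{a_2}^\Sigma\|_\ast\,\|u_1\|_{H^{1/2}_{00}(\Sigma)}\|u_2\|_{H^{1/2}_{00}(\Sigma)}\le C r^{2-n}\|\Lambda_{a_1}^\Sigma-\Lambda_{a_2}^\Sigma\|_\ast .
\]
The trace norms are controlled by $\|u_i\|_{H^1(\Omega\cap\{\text{near }\Sigma\})}\lesssim r^{(2-n)/2}$ via the gradient bound.

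We then split the integral over $B_\rho(x_0)\cap\Omega$ and its complement. Outside the ball the integrand is $O(1)$. Inside, we replace $H(x)(a_1-a_2)(x)$ by $H(x_0)(a_1-a_2)(x_0)$ and $\nabla u_i$ by $\nabla\Gamma_i$, with errors of order $|a_1-a_2|$-independent constants times $\int |x-x_0|^\alpha|x-y_r|^{2-2n}\lesssim r^{2-n+\alpha}$. This yields the lower bound
\[
|a_1-a_2|(x_0)\,\Big|\int_{B_\rho(x_0)\cap\Omega} H(x_0)\nabla\Gamma_1\cdot\nabla\Gamma_2\,dx\Big| \le C\big(r^{2-n}\|\Lambda_{a_1}^\Sigma-\Lambda_{a_2}^\Sigma\|_\ast + r^{2-n+\alpha}+1\big),
\]
and we aim to show that the integral on the left is at least $c\,r^{2-n}$. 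Dividing by $r^{2-n}$ and letting $r\to0$ then gives \eqref{eqn: Stab result}. (As in \cite{A}, the $r^{\alpha}$ error term is absorbed by the standard argument: it is multiplied by $\|a_1-a_2\|_\infty$, so one first derives a Hölder-type bound and then optimizes in $r$.)

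The main obstacle is this lower bound, $\Re\big(e^{i\theta}H(x_0)\nabla\Gamma_1\cdot\nabla\Gamma_2\big)\ge c|x-y_r|^{2-2n}$ on a cone, which can fail in the complex case because of the phases. Here the restriction \eqref{eqn: k1} on $k$ enters, and each of its three terms should control one phase. We have $\nabla\Gamma_i=c_i(A_i^{-1}z\cdot z)^{-n/2}A_i^{-1}z$ with $z=x-y_r$. The factor $(A_i^{-1}z\cdot z)^{-n/2}$ has argument bounded by $\tfrac n2\arctan(kM^6)$, which is at most $\mathcal{B}\pi/4$ (respectively $\mathcal{C}\pi/4$) exactly when $k\le M^{-6}\tan(\mathcal{B}\pi/(2n))$ (respectively with $\mathcal{C}$). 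The remaining quadratic form $H(x_0)A_1^{-1}z\cdot A_2^{-1}z$ should have argument at most $\mathcal{A}\pi/4$ under the first condition in \eqref{eqn: k1}; this uses commutativity and the spectral bounds \eqref{eqn: Re A inv bound}–\eqref{eqn: AI inv bound2}, diagonalizing simultaneously. The total phase is then at most $\pi/4<\pi/2$, so the real part is comparable to the modulus, and integrating $|z|^{2-2n}$ over $B_\rho(x_0)\cap\Omega$ (which contains a cone at $x_0$ by the Lipschitz property) gives $\gtrsim r^{2-n}$. I expect the first-term phase computation, matching the specific constant $(m^3-m^{-3})/(M^3-M^{-3})$, to be the most delicate step.
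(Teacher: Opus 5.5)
Your overall scheme is the paper's. You use $m=0$ singular solutions of Theorem \ref{sing thm} with pole at $x_0+\tau\widetilde{\nu}$, Alessandrini's identity \eqref{eqn: Aless I}, and freeze the coefficients at $x_0$ with errors $O(\tau^{2-n+\alpha})$, $O(\tau^{2-n+\beta})$. You bound the trace norms by $\tau^{(2-n)/2}$ and extract a phase condition from \eqref{eqn: k1}. Your bound $\frac n2\arctan(kM^6)$ on the arguments of $(A_i^{-1}z\cdot z)^{-n/2}$ is exactly where the $\mathcal{B}$ and $\mathcal{C}$ terms come from.

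The gap is in the step you flag yourself: the lower bound for $H(x_0)A_1^{-1}z\cdot A_2^{-1}z$. You propose to diagonalize $H(x_0)$, $A_1^{-1}$, $A_2^{-1}$ simultaneously. The hypotheses only say that $A_R(x,t)$ and $A_I(x,t)$ commute for each fixed $(x,t)$. Nothing makes $A(x_0,a_1(x_0))$, $A(x_0,a_2(x_0))$ and $D_tA(x_0,t)$ at intermediate $t$ commute with one another. Without that, $H(x_0)A_1^{-1}z\cdot A_2^{-1}z$ is a bilinear form evaluated at two \emph{different} complex vectors. The monotonicity bound $\Re(H\xi\cdot\xi)\ge\mathcal{D}^{-1}|\xi|^2$ then controls neither its real part nor its modulus. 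A bound on its argument alone would not give the lower bound $c|z|^{2-2n}$ you need for the integrand.

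The missing idea is the one the paper uses in \eqref{eqn: LHS lower}--\eqref{eqn: F norm}, namely the identity
\[
(a_1-a_2)(x_0)\,A_2^{-1}H(x_0)A_1^{-1}=A_2^{-1}(A_1-A_2)A_1^{-1}=A_2^{-1}-A_1^{-1}=\int_{a_2(x_0)}^{a_1(x_0)}A^{-1}(x_0,t)\,D_tA(x_0,t)\,A^{-1}(x_0,t)\,dt .
\]
It turns $(a_1-a_2)(x_0)$ times your factor into $\int D_tA(x_0,t)\,w_t\cdot w_t\,dt$, with a single vector $w_t=A^{-1}(x_0,t)z$ in each quadratic form. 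Monotonicity then applies and gives $\Re\ge c\,(a_1-a_2)(x_0)|z|^2$.

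The paper applies \eqref{eqn: mono} directly to the complex vector $w_t$. With the bilinear product, $\Re(D_tA\,w\cdot w)>0$ cannot hold for all $w\in\mathbb{C}^n$ (take $w=i\eta$). So one really uses that $\Im w_t=O(k)|z|$ is small compared with $\Re w_t$.

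Once this positivity is available, \eqref{eqn: k1} is only needed to guarantee $|\Im F|\le\Re F$ for the numerator $F$ in \eqref{eqn: F}. The paper asserts this without deriving the constant $(m^3-m^{-3})/(M^3-M^{-3})$, so you are not missing a computation there.

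Finally, drop the parenthetical about first deriving a H\"older bound and optimizing in $r$; that would lose the Lipschitz exponent. The error terms carry constants depending only on the a-priori data, not on $\|a_1-a_2\|_\infty$. These come from the H\"older continuity of $x\mapsto A(x,a_i(x))$ given by Lemma \ref{A Sobolev Norm lemma} and the remainder bounds of Theorem \ref{sing thm}. Dividing by $r^{2-n}$ and letting $r\to0$ therefore suffices, exactly as in \eqref{eqn: Final 1}--\eqref{eqn: pre result}.
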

\begin{remark}
    It should be noted that the range for $k$ such that \eqref{eqn: F assumptions} holds is valid for all positive values of $\mathcal{A},\mathcal{B}$, and $\mathcal{C}$ satisfying $\mathcal{A}+\mathcal{B}+\mathcal{C}\leq1$. The interval given in \eqref{eqn: k1} is chosen to optimise the range of feasible values of $k$. In particular, by choosing $\mathcal{A},\mathcal{B},\mathcal{C}=1/3$, \eqref{eqn: k1} simplifies to
    \begin{equation}
        0<k\leq \min\bigg\{\frac{(m^3-m^{-3})\tan\big(\frac{\pi}{12}\big)}{M^3-M^{-3}},\quad M^{-6}\tan\Big(\frac{\pi}{6n}\Big)\bigg\}.
    \end{equation}
\end{remark}

\begin{theorem}[Local H\"older Stability of derivatives at the boundary]\label{der thm}
       Let the hypotheses of Theorem \ref{thm 1} be satisfied. Furthermore, if there exists $E_h>0$ such that
   \begin{equation}
       A\in C^{h,\alpha}\big(\overline{U}_\eta\times[\lambda^{-1},\lambda]\big),
   \end{equation}
    \begin{equation}
        \label{eqn: A C norm}
        \Vert A\Vert_{C^{h,\alpha}(\overline{U}_\eta\times[\lambda^{-1},\lambda])}\leq E_h,
    \end{equation}
    \begin{equation}
        \Vert a_1-a_2\Vert_{C^{h,\alpha}(\overline{U}_\eta)}\leq E_h,
    \end{equation}
    for some $\alpha$, $0<\alpha<1$ and $\eta\leq\eta_0$, then we have
    \begin{equation}
        \Big\Vert D^h\Big(A\big(x,a_1(x)\big)-A\big(x,a_2(x)\big)\Big)\Big\Vert_{L^\infty(\overline{\Sigma}_\eta)}\leq C\big\Vert\Lambda_{a_1}^\Sigma-\Lambda_{a_2}^\Sigma\big\Vert_\ast^{\alpha\delta_h},
    \end{equation}
    where $h\geq1$ is an integer,
    \begin{equation}
        \delta_h=\underset{i=0}{\overset{h}{\prod}}\frac{\alpha}{\alpha+i}
    \end{equation}
    and $C$ is a positive constant which depends only on the \textit{a-priori} data and $h$.
\end{theorem}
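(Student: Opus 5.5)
The plan is to run an induction on $h$, in the style of Alessandrini's singular-solution method in \cite{A}, adapted as in \cite{Cu-G-N, G}. The base case is Theorem \ref{thm 1}, which bounds $\Vert a_1-a_2\Vert_{L^\infty(\Sigma_\eta)}$. This uses the fact that, by \eqref{eqn: mono}, $|A(x,a_1)-A(x,a_2)|\simeq|a_1-a_2|$, so it suffices to estimate $D^h(a_1-a_2)$ on $\overline\Sigma_\eta$. The chain rule together with \eqref{eqn: A C norm} then transfers the estimate back to $D^h(A(\cdot,a_1)-A(\cdot,a_2))$.

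\textbf{Inductive hypothesis.} Assume
\[
\Vert D^j(a_1-a_2)\Vert_{L^\infty(\Sigma_\eta)}\le C\varepsilon^{\delta_j}, \qquad j<h,
\]
where $\varepsilon=\Vert\Lambda_{a_1}^\Sigma-\Lambda_{a_2}^\Sigma\Vert_\ast$ and $\delta_0=1$. Fix $x_0\in\overline\Sigma_\eta$ and suppose that $D^h(a_1-a_2)(x_0)$ has a component in some direction of size $\theta$.

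\textbf{Reduction to the normal derivative.} First reduce to the normal derivative $\partial_\nu^h(a_1-a_2)(x_0)$. Tangential derivatives of order $h$ are controlled by interpolating the $L^\infty(\Sigma_\eta)$ bound on $D^{h-1}(a_1-a_2)$ with the $C^{h,\alpha}$ bound $E_h$. Mixed derivatives are handled in the same way, by the standard interpolation on the boundary graph. This tangential interpolation is where the factors $\alpha/(\alpha+i)$ in $\delta_h$ enter.

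\textbf{Taylor expansion near $x_0$.} Taylor-expand $a_1-a_2$ in the normal direction. Using the inductive bounds on the lower-order normal derivatives and the $C^{h,\alpha}$ bound, we get
\[
\big|(a_1-a_2)(x)-\tfrac{1}{h!}\partial_\nu^h(a_1-a_2)(x_0)\,d(x)^h\big|\le C\big(\varepsilon^{\delta_{h-1}}+|x-x_0|^{h+\alpha}\big)
\]
in $B_r(x_0)\cap\Omega$, where $d(x)\approx(x-x_0)\cdot(-\nu)$.

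\textbf{Alessandrini's identity with singular solutions.} Insert into \eqref{eqn: Aless I} the singular solutions $u_1,u_2$ of Theorem \ref{sing thm}. Their singularity is at $y=x_0+\tau\nu$ outside $\Omega$, their traces are compactly supported in $\Sigma$, and their gradients behave like $|x-y|^{1-n-m}$ (taking a suitable order $m$, or just the fundamental-solution order). Write
\[
A(x,a_1)-A(x,a_2)=\int_0^1 D_tA\big(x,a_2+s(a_1-a_2)\big)\,ds\,(a_1-a_2).
\]
On the left side we get $|\langle(\Lambda_{a_1}-\Lambda_{a_2})f_1,\bar f_2\rangle|\le\varepsilon\Vert f_1\Vert\Vert f_2\Vert\le C\varepsilon\tau^{-c}$. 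On the right side, the main term near $x_0$ is $\theta$ times
\[
\int_{B_r\cap\Omega} d(x)^h\,\Re\Big(\textstyle\int_0^1 D_tA\,ds\;\nabla u_1\cdot\nabla u_2\Big)\,dx .
\]

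\textbf{Main obstacle: bounding the main term from below.} This is the step I expect to be hardest, because the integrand is complex and $\nabla u_1\cdot\nabla u_2$ is not $|\nabla u|^2$. I would use Lemma \ref{Lem 4.1}, which gives a lower bound $|\nabla u_i(x)|\ge c|x-y|^{1-n}$ near $y$, uniformly in $k$. I would combine it with the approximation $\nabla u_i\approx\nabla\Gamma$ by the fundamental solution of the frozen-coefficient operator, whose argument in $\mathbb{C}$ is controlled. The smallness condition \eqref{eqn: k1} keeps the phases $\arg(D_tA\,\nabla u_1\cdot\nabla u_2)$ within an angle less than $\pi/2$, so the real part is at least $c|x-y|^{2-2n}$. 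The tan-conditions in \eqref{eqn: k1} are precisely of this form, and I would invoke them as established in the proof of Theorem \ref{thm 1}. This yields a lower bound of the form $\theta\,\tau^{h+2-n}$.

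\textbf{Error terms and optimization.} The remainder terms are bounded by
\[
C\big(\varepsilon^{\delta_{h-1}}\tau^{2-n}+\tau^{h+\alpha+2-n}+r^{-c}\big),
\]
where the last term accounts for the region away from $x_0$. Combining, we obtain
\[
\theta\le C\big(\varepsilon\tau^{-c}+\varepsilon^{\delta_{h-1}}\tau^{-h}+\tau^{\alpha}\big).
\]
Choosing $\tau=\varepsilon^{\delta_{h-1}/(\alpha+h)}$ gives $\theta\le C\varepsilon^{\alpha\delta_{h-1}/(\alpha+h)}=C\varepsilon^{\delta_h}$. After the final chain-rule step this is consistent with the stated exponent $\alpha\delta_h$, up to bookkeeping of the tangential interpolation.
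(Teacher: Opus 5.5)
\textbf{Gap: the order of the singularity.} Your architecture matches the paper's. You use induction, Alessandrini's identity with the localized singular solutions of Theorem \ref{sing thm} centred at $z_\tau=x^0+\tau\widetilde{\nu}$ (your $y$), a normal Taylor expansion of $a_1-a_2$, the choice $\tau=\varepsilon^{\delta_{h-1}/(h+\alpha)}$, and then interpolation and the chain rule. (Incidentally, the factor $\alpha/(\alpha+h)$ comes from that choice of $\tau$, not from the tangential interpolation, which only costs $\alpha/(1+\alpha)\geq\alpha/(\alpha+h)$.) The genuine gap is the order $m$ of the singularity. You allow ``just the fundamental-solution order'', and all your exponents are those of $m=0$. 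That cannot work in general:
\begin{itemize}
\item In polar coordinates about $z_\tau$, $\int_{B_r\cap\Omega}d(x)^h|x-z_\tau|^{2-2n}\,dx\approx\int_{c\tau}^{r}s^{h+1-n}\,ds$. This is of order $\tau^{h+2-n}$ only if $h<n-2$; it is of order $\log(1/\tau)$ for $h=n-2$ and bounded for $h>n-2$.
\item In passing to $\theta\le C(\varepsilon\tau^{-c}+\varepsilon^{\delta_{h-1}}\tau^{-h}+\tau^\alpha)$ you dropped the far-field term. Dividing the $O(1)$ contribution of $\Omega\setminus B_r(z_\tau)$ by $\tau^{h+2-n}$ gives $\tau^{n-2-h}$, which is $\lesssim\tau^\alpha$ only when $h\le n-3$.
\end{itemize}
So for $n=3$ your scheme already breaks down at $h=1$, where it yields at most a logarithmic rate. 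The missing idea is to take $m$ large depending on $h$; this is exactly why Theorem \ref{sing thm} provides singularities of every order. The main term is then $\theta\,\tau^{2-n-2m+h}$ and the boundary term is $\varepsilon\,\tau^{2-n-2m}$. The far field contributes $\tau^{n+2m-2-h}\le\tau^{\alpha}$ once $2m\ge h+\alpha+2-n$. This is \eqref{eqn: I2.6} and the paper's ``choosing $m$ sufficiently large''.

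\textbf{Consequence for your lower bound.} Once $m\ge1$, your lower bound for $\Re\big(D_tA\,\nabla u_1\cdot\nabla u_2\big)$ has to be redone. Condition \eqref{eqn: k1} is tailored to the $m=0$ kernel $\big(A^{-1}(x-z)\cdot(x-z)\big)^{-n/2}$, which is where the $\tan\big(\mathcal{B}\pi/(2n)\big)$ terms come from. It does not control the phase of $\big(A^{-1}(x-z)\cdot(x-z)\big)^{(2-n-m)/2}C_m^{\frac{n-2}{2}}(\cdot)$ with complex argument, and that phase grows with $m$. So you cannot simply invoke what was done in Theorem \ref{thm 1}.

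The paper argues differently:
\begin{itemize}
\item It writes $\nabla u_2=\nabla u_1+(\nabla u_2-\nabla u_1)$ and applies \eqref{eqn: mono} to the diagonal term.
\item It bounds $|\nabla u_1|$ from below by Lemma \ref{Lem 4.1}, which holds for every $m$ with no restriction on $k$.
\item It makes $|\nabla u_1-\nabla u_2|$ a small multiple of $|x-z_\tau|^{1-n-m}$ using $|a_1(x^0)-a_2(x^0)|\le C\varepsilon$ from the proof of Theorem \ref{thm 1}.
\end{itemize}
Your worry about phases is legitimate even for that step. It applies \eqref{eqn: mono}, which is written with the bilinear product, to the complex vector $\nabla u_1$, and with that product the inequality cannot hold for all $\xi\in\mathbb{C}^n$ (replace $\xi$ by $i\xi$). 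So for $m\ge1$ you need either the paper's formulation of the monotonicity hypothesis or a genuine phase estimate for the order-$m$ singular solutions, which \eqref{eqn: k1} does not supply.
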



\section{Singular solutions}\label{sec3}
We consider the operator introduced in \eqref{operator L},
\begin{equation}
    \label{eqn: EIT op}
    L=\textrm{div}\big(A\nabla\cdot\big)\quad\textrm{in}\quad B_R(z)=\big\{x\in\mathbb{R}^n\;|\;|x-z|<R\big\},
\end{equation}
with $A\big(\cdot,a(\cdot)\big)\in\mathcal{H}$ and $a$ satisfying Assumption \ref{assump on ai}.
 We recall the following result from \cite{Cu-G-N}.
 \begin{theorem}\label{Jasons thm}
     Given $L$ on $B_R(z)$ as in \eqref{eqn: EIT op}, for any $m=0,1,2,...$, there exists $u_m\in W^{1,p}(B_R(z)\backslash\{z\})$ such that 
     \begin{equation}\label{u solution}
     Lu_m=0\qquad\textnormal{in}\quad B_R(z)\backslash\{z\}, 
     \end{equation}
     with
     \begin{align}\label{u explicit}
         u_m(x)=&\big(A^{-1}\big(z,a(z)\big)(x-z)\cdot(x-z)\big)^{\frac{2-n-m}{2}}m!\Big(A^{-1}_{nn}\big(z,a(z)\big)\Big)^{\frac{m}{2}}\nonumber\\
        &\times C_m^{\frac{n-2}{2}}\Bigg(\frac{A^{-1}_{n}\big(z,a(z)\big)(x-z)}{\Big(A^{-1}_{nn}\big(z,a(z)\big)\Big)^{\frac{1}{2}}\big(A^{-1}\big(z,a(z)\big)(x-z)\cdot(x-z)\big)^\frac{1}{2}}\Bigg)+w(x),
     \end{align}
     where $C_m^{\frac{n-2}{2}}:\mathbb{C}\rightarrow\mathbb{C}$ is the complex Gegenbauer polynomial of degree $m$ and order $\frac{n-2}{2}$ and $A^{-1}_{n}\big(z,a(z)\big)$, $A^{-1}_{nn}\big(z,a(z)\big)$ denote the last row, last entry in the last row of the matrix $A^{-1}\big(z,a(z)\big)$, respectively. Moreover $w$ in \eqref{u explicit} satisfies
    \begin{equation}\label{eqn: w R1}
        |w(x)|+|x-z||Dw(x)|\leq C|x-z|^{2-n-m+\alpha},\quad \textrm{for any}\quad x\in B_R(z)\backslash\{z\}
    \end{equation}
    and
    \begin{equation}\label{eqn: w R2}
        \bigg(\int_{r<|x-z|<2r}|D^2w|^p\;dx\bigg)^{\frac{1}{p}}\leq Cr^{\frac{n}{p}-n+\alpha},\quad\textrm{for every}\quad r,\quad0<r<R/2.
    \end{equation}
    Here $\alpha$ is any number such that $0<\alpha<1-\frac{n}{p}$, and $C$ is a positive constant depending only on the a-priori data $n,p,\mathcal{E}_1,\mathcal{E}_2, k$, $E$ and on $\alpha$, $R$.
 \end{theorem}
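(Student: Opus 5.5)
The statement is quoted from \cite{Cu-G-N}, so the proof will follow the singular-solution construction of \cite{A}, adapted to the complex operator by passing to the real strongly elliptic system \eqref{eqn: system}.

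\textbf{Step 1 (leading term).} We freeze the coefficients at $z$ and set $A_0=A\big(z,a(z)\big)$, a constant complex-symmetric matrix with $\Re A_0$ positive definite. After an affine (complex) change of variables, $\textrm{div}(A_0\nabla\cdot)$ becomes the Laplacian. Its homogeneous solutions of degree $2-n-m$ are the derivatives $\partial_{x_n}^m$ of the fundamental solution $\big(A_0^{-1}(x-z)\cdot(x-z)\big)^{(2-n)/2}$. The generating function of Gegenbauer polynomials then gives the explicit expression for $u_m^0$, the first term of \eqref{u explicit}. Since $\Re A_0^{-1}>0$, namely by \eqref{eqn: Re A inv bound}, the quadratic form $A_0^{-1}(x-z)\cdot(x-z)$ has positive real part, so it stays away from $(-\infty,0]$. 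Hence the principal branch of the complex power is analytic on $\mathbb{R}^n\setminus\{z\}$. This is the first point requiring care in the complex setting. The function $u_m^0$ is homogeneous of degree $2-n-m$ and satisfies $\textrm{div}(A_0\nabla u_m^0)=0$ away from $z$.

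\textbf{Step 2 (equation for the remainder).} We look for $u_m=u_m^0+w$. Then $w$ must solve
\[
Lw=-\textrm{div}\big((A-A_0)\nabla u_m^0\big)=:\textrm{div}\,F .
\]
Since $A\in W^{1,p}$ with $p>n$, Morrey's embedding gives $|A(x)-A_0|\le C|x-z|^{1-n/p}$, hence $|F|\le C|x-z|^{1-n-m+\alpha}$ for any $\alpha<1-n/p$. Following \cite{A}, we construct $w$ by a dyadic decomposition of $B_R(z)$ into annuli $\{2^{-j-1}R<|x-z|<2^{-j}R\}$. On each annulus we solve for the system \eqref{eqn: system}, which is strongly elliptic by \eqref{eqn: C elliptic}, using the Green matrix of \cite[Section 3.1]{FosGabSin25} or weighted $L^p$ estimates, and then sum the pieces. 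The pointwise bound \eqref{eqn: w R1} follows from the weighted estimates together with interior $W^{2,p}$, equivalently $C^{1,\alpha}$, regularity for systems with $W^{1,p}$ coefficients, applied on rescaled annuli. The bound \eqref{eqn: w R2} comes from the same rescaled $W^{2,p}$ estimate applied on $r<|x-z|<2r$.

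\textbf{Main obstacle.} The hard part is the weighted solvability for $w$ with the sharp decay $|x-z|^{2-n-m+\alpha}$, uniformly in the annuli. This requires $2-n-m+\alpha$ not to be an exceptional exponent, which holds because $\alpha\in(0,1)$ is not an integer. It also requires the constants to depend only on $\mathcal{E}_1,\mathcal{E}_2,k,E,n,p$. For this I would follow \cite{A}'s iteration: first solve with the Green matrix, then correct by subtracting harmonic-type polynomial terms so as to kill the wrong growth. The ellipticity constants enter only through $C_1$ and the bounds on $A^{-1}$, which is why $k$ appears among the dependencies.
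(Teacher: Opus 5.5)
Your Step 1 is correct: the leading term is $(-1)^m\partial_{x_n}^m$ of $\big(A_0^{-1}(x-z)\cdot(x-z)\big)^{(2-n)/2}$, read off from the Gegenbauer generating function, and $\Re A_0^{-1}>0$ justifies the principal branch. One caveat: a complex linear change of variables does not map $\mathbb{R}^n$ into itself, so nothing can literally be transported from the Laplacian. You have to work directly with the fundamental solution $\Gamma_0(x)=c_n(\det A_0)^{-1/2}(A_0^{-1}x\cdot x)^{(2-n)/2}$ of $L_0=\textrm{div}(A_0\nabla\cdot)$.

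\textbf{The gap is in Step 2.} There you invert the variable-coefficient operator $L$ itself, through its Green matrix, on dyadic pieces $F_j$ of $F$, and then sum. For $m\ge 2$ this cannot work. The bound $|F|\sim|x-z|^{1-n-m+\alpha}$ is not integrable at $z$. The piece $w_j$ coming from the annulus of radius $2^{-j}R$ has far field, up to sign, $\approx\nabla_yG(x,z)\cdot\int F_j$. This is of size up to $|x-z|^{1-n}R^{1-m+\alpha}2^{j(m-1-\alpha)}$, and in general there is no cancellation, so the series diverges. The cure is to subtract the multipole terms $\frac{(y-z)^\beta}{\beta!}\,\partial_y^\beta\nabla_yG(x,y)|_{y=z}$, $|\beta|\le m-2$. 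These are singular in $x$ (homogeneous of degree $1-n-|\beta|$ for constant coefficients), not polynomials. Subtracting them requires a Taylor expansion of $\nabla_yG(x,\cdot)$ at $z$ of order $m-2$ with remainder $O(|y-z|^\theta)$, $\theta>m-1-\alpha$. For $W^{1,p}$ coefficients, however, $G(x,\cdot)$ is in general only $C^{1,1-n/p}$ near $z$. So the "correct by subtracting" step you describe is not available for $L$.

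\textbf{The missing idea} is to invert only the frozen operator and treat $L-L_0$ as a perturbation.
\begin{itemize}
\item \emph{Right inverse of $L_0$.} Since $\Gamma_0$ is smooth and homogeneous of degree $2-n$ off the origin, you can subtract multipoles of order $m-2$; this is possible precisely because $2-n-m+\alpha\notin\mathbb{Z}$. Together with Calder\'on--Zygmund estimates this gives a bounded right inverse $T$ of $L_0$. Its target space is normed by $\sup_r r^{-(2-n-m+\alpha)}$ times the scale-invariant $W^{2,p}$ norm on $\{r<|x-z|<2r\}$, and its source is the matching weighted $L^p$ space.
\item \emph{Contraction near $z$.} Solve $w=-T\big[(L-L_0)(u_m^0+w)\big]$, writing $(L-L_0)v=(A-A_0):D^2v+(\textrm{div}A)\cdot\nabla v$. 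On a small ball $B_{r_0}(z)$ this map is a contraction, because $|A-A_0|\le Cr_0^{1-n/p}$ and $\|DA\|_{L^p(B_{r_0}(z))}$ are small. The restriction $\alpha<1-n/p$ is what places $(L-L_0)u_m^0$ in the weighted space.
\item \emph{Extension to $B_R(z)$.} Pass from $B_{r_0}(z)$ to $B_R(z)$ with a cutoff $\chi$ and a correction $\phi$ solving $L\phi=-L(\chi u)$, whose right-hand side is supported away from $z$. This is where the Green matrix, or just the $H^1$ and interior $W^{2,p}$ theory of the strongly elliptic real system, legitimately enters. It is also where $C$ acquires its dependence on $R$.
\end{itemize}
The paper gives no argument of its own here; it only cites \cite[Theorem 2.2 and Claim 2.1]{Cu-G-N}.
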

 \begin{proof}
     See \cite[Theorem 2.2 and Claim 2.1]{Cu-G-N}.
 \end{proof}
 \begin{remark}
Assumption \eqref{eqn: AR assump} implies that for any $x,z\in\mathbb{R}^n$, with $x\neq z$, $\Re{\Big\{A^{-1}(z,a(z))(x-z)\cdot (x-z)\Big\}}>0$ and that without loss of generality we can assume that $A^{-1}_{nn}\big(z,a(z)\big)>0$.
 \end{remark}
 Next, we show the existence of singular solutions $u^{loc}_m$ having the same singular behaviour of $u_m$ constructed in Theorem \ref{Jasons thm}, equation \eqref{u explicit} and such that
\[u^{loc}_{m}\big|_{\partial\Omega}\in H^\frac{1}{2}_{00}(\Sigma).\]
From Definition \ref{def: local subsets} and Remark \ref{remark eta0}, for any $\eta$, $0<\eta\leq\eta_0$, we can construct a domain $\Omega_\eta$, with Lipschitz constants depending only on $\eta, r_0,L$ such that
\begin{equation}
    \Omega\subset\Omega_\eta, \quad \partial\Omega\cap\Omega_\eta\subset\subset\Sigma,
\end{equation}
and \begin{equation}
    \textrm{dist}(x,\partial\Omega_\eta)\geq\frac{\eta}{2},\quad\textrm{for every}\quad x\in U_\eta.
\end{equation}
\begin{theorem}[Singular solutions]\label{sing thm}
    Let $\Omega$ and $\Sigma$ be as in Theorem \ref{thm 1}. For any $\eta$, $0<\eta\leq\eta_0$, we fix an arbitrary point $z\in U_\eta$. For any $m=0,1,2,...$, there exists $u^{loc}_m\in H_{loc}^{1}\big(\overline{\Omega}_\eta\backslash\{z\}\big)\cap W_{loc}^{2,p}\big(\Omega_\eta\backslash\{z\}\big)$ such that
    \begin{gather} 
        Lu^{loc}_m=0,\quad \textrm{in}\quad\Omega_\eta\backslash\{z\},\label{eqn: sing eqn}\\
    u^{loc}_m=0,\quad\textrm{on}\quad\partial\Omega_\eta\quad\textrm{in the trace sense,}\label{eqn: extra req for sing}
    \end{gather}
    with
    \begin{align}
        \label{eqn: u sing}
        u^{loc}_m(x)=&\big(A^{-1}\big(z,a(z)\big)(x-z)\cdot(x-z)\big)^{\frac{2-n-m}{2}}m!\Big(A^{-1}_{nn}\big(z,a(z)\big)\Big)^{\frac{m}{2}}\nonumber\\
        &\times C_m^{\frac{n-2}{2}}\Bigg(\frac{A^{-1}_{n}\big(z,a(z)\big)(x-z)}{\Big(A^{-1}_{nn}\big(z,a(z)\big)\Big)^{\frac{1}{2}}\big(A^{-1}\big(z,a(z)\big)(x-z)\cdot(x-z)\big)^\frac{1}{2}}\Bigg)+v(x).
     \end{align}

     Moreover $v$ satisfies
    \begin{equation}
        \label{eqn: sing R1}
        |v(x)|+|x-z||Dv(x)|\leq C|x-z|^{2-n-m+\alpha},\quad \textrm{for any}\quad x\in B_\frac{\eta}{4}(z)\backslash\{z\},
    \end{equation}
    \begin{equation}
        \label{eqn: sing R2}
        \bigg(\int_{r<|x-z|<2r}|D^2v|^p\;dx\bigg)^{\frac{1}{p}}\leq Cr^{\frac{n}{p}-n+\alpha},\quad\textrm{for every}\quad r,\quad0<r<\eta/8.
    \end{equation}
    Here $\alpha$ is any number such that $0<\alpha<1-\frac{n}{p}$, and $C$ is a positive constant depending on $\alpha,n,p,R,\mathcal{E}_1,\mathcal{E}_2,k,\eta_0,\eta$ and $E$.
\end{theorem}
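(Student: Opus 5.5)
Our plan is to obtain $u^{loc}_m$ by correcting the local singular solution of Theorem \ref{Jasons thm}, in the spirit of \cite{A} and \cite{Cu-G-N-S}. The correction is a solution of a Dirichlet problem on $\Omega_\eta$ with regular data, built so that it vanishes on $\partial\Omega_\eta$.

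\textbf{Step 1: extension of the coefficients.} First extend $A(\cdot,a(\cdot))$ from $\Omega$ to $\Omega_\eta$ and to a ball $B_R(z)\supset\Omega_\eta$. Take $R$ comparable to $\textrm{diam}(\Omega)+\eta_0$. We extend $A_R$, $A_I$ and $a$ in $W^{1,p}$, using the Lipschitz character of $\partial\Omega$. We keep the bounds \eqref{eqn: AR assump}, \eqref{eqn: AI1 assump} (or \eqref{eqn: AI2 assump}) and \eqref{eqn: a ass2}, with constants depending only on the a-priori data. This can be done by reflection followed by truncation of eigenvalues, since $A_R$ and $A_I$ commute. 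The extended operator is still strongly elliptic in the sense of \eqref{eqn: C elliptic}. Theorem \ref{Jasons thm} then applies on $B_R(z)$ and gives $u_m=\Phi_m+w$, where $\Phi_m$ is the explicit Gegenbauer term and $w$ satisfies \eqref{eqn: w R1}, \eqref{eqn: w R2}.

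\textbf{Step 2: correction.} Since $z\in U_\eta$ and $\textrm{dist}(z,\partial\Omega_\eta)\geq\eta/2$, the function $u_m$ is $W^{1,p}$, and in particular $C^{0,\alpha}\cap H^{1}$, in the annular region $\{x\in\Omega_\eta : |x-z|>\eta/4\}$. Its norms there are bounded by \eqref{eqn: w R1} and by the explicit form of $\Phi_m$, in terms of $\eta$ and the a-priori data. Let $z_0$ solve
\[\textrm{div}(A\nabla z_0)=0 \ \textrm{in}\ \Omega_\eta,\qquad z_0=u_m \ \textrm{on}\ \partial\Omega_\eta.\]
This problem is well posed in $H^1(\Omega_\eta)$ by Lax--Milgram applied to the system form \eqref{eqn: system}: the bilinear form is coercive because of \eqref{eqn: C dot}. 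The boundary datum is the trace of $\chi u_m$, where $\chi$ is a cutoff equal to $1$ near $\partial\Omega_\eta$ and $0$ on $B_{\eta/4}(z)$. This gives $\|z_0\|_{H^1(\Omega_\eta)}\leq C$.

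\textbf{Step 3: definition and estimates.} Set $u^{loc}_m=u_m-z_0$ and $v=w-z_0$. Then \eqref{eqn: sing eqn} and \eqref{eqn: extra req for sing} hold by construction, and \eqref{eqn: u sing} holds with this $v$. To obtain \eqref{eqn: sing R1} and \eqref{eqn: sing R2} it suffices to control $z_0$ in $B_{3\eta/8}(z)$. There $z_0$ solves the homogeneous equation and has $H^1$ norm bounded by $C$. For the real system with $W^{1,p}$ coefficients, $p>n$, interior regularity (Campanato/$W^{2,p}$ estimates, as used for the Green matrix in \cite[Section 3.1]{FosGabSin25}) gives
\[\|z_0\|_{W^{2,p}(B_{\eta/4}(z))}+\|z_0\|_{C^{1}(B_{\eta/4}(z))}\leq C.\]
Since $2-n-m+\alpha<0$, the terms $|z_0|+|x-z||Dz_0|\leq C\leq C'|x-z|^{2-n-m+\alpha}$ on $B_{\eta/4}(z)$. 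Similarly the $L^p$ norm of $D^2z_0$ on the annuli is bounded by $C\leq C' r^{n/p-n+\alpha}$, since that exponent is negative. Adding these bounds to those for $w$ yields both estimates. The regularity $H^1_{loc}(\overline{\Omega}_\eta\setminus\{z\})\cap W^{2,p}_{loc}(\Omega_\eta\setminus\{z\})$ follows from the corresponding regularity of $u_m$ and $z_0$.

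\textbf{Main obstacle.} The delicate point is Step 1. The extension of $A(\cdot,a(\cdot))$ outside $\Omega$ must preserve symmetry, commutativity of $A_R$ and $A_I$, and the ellipticity bounds with controlled constants, so that Theorem \ref{Jasons thm} and the constants in \eqref{eqn: sing R1} remain valid. If commutativity is hard to preserve, the fallback is the following. Extend $a$ only, keeping $\lambda^{-1}\leq a\leq\lambda$ by truncation. Extend the family $A(\cdot,t)$ in $x$ by a reflection that acts identically on all matrix entries; this preserves symmetry and commutation pointwise. Restore the eigenvalue bounds by composition with a smooth cutoff in the spectral variable.
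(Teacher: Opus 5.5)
Your proposal is correct and is essentially the paper's proof. The paper likewise takes $u_m$ from Theorem \ref{Jasons thm} on a ball $B_R(z)$ with $\Omega_\eta\subset B_{R/2}(z)$, adds the solution $\omega_m$ (your $-z_0$) of $L\omega_m=0$ in $\Omega_\eta$, $\omega_m=-u_m$ on $\partial\Omega_\eta$, bounds it in $H^1(\Omega_\eta)$, and absorbs it into $v=w+\omega_m$ through an interior $W^{2,p}$ estimate on $B_{\eta/4}(z)$. The differences are minor:
\begin{itemize}
\item Your Step 1 makes explicit the extension of $A(\cdot,a(\cdot))$ beyond $\Omega$, which the paper tacitly assumes by treating $L$ as defined on $B_R(z)$.
\item Your cutoff $\chi u_m$ gives the $H^1$ bound more cleanly than the paper's boundary sup-bound plus trace theorem.
\item You check explicitly, via the negative exponents, that a bounded correction fits into \eqref{eqn: sing R1} and \eqref{eqn: sing R2}, which the paper leaves implicit.
\end{itemize}
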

\begin{remark}
    Note that if $z\in U_\eta\backslash\Omega$, then $u_m^{loc}\in H^1(\Omega)$ and its trace satisfies $u_m^{loc}\big\vert_{\partial\Omega}\in H^\frac{1}{2}_{00}(\Sigma)$.
\end{remark}
\begin{proof}[Proof of Theorem \ref{sing thm}]
We consider a ball $B_R(z)$ with radius $R$ sufficiently large such that $\Omega_\eta\subset B_\frac{R}{2}(z)\Subset B_R(z)$. For a fixed non-negative integer $m$, we consider the singular solution $u_m$ on $B_R(z)$, having an isolated singularity at $x=z$, introduced in Theorem \ref{Jasons thm}, equation \eqref{u explicit} and define $\omega_m$ to be the solution to 
\begin{equation*}
    \begin{cases}
        L\omega_m=0, &\quad\textrm{in}\quad\Omega_\eta,\\
        \omega_m=-u_m, &\quad\textrm{on}\quad\partial\Omega_\eta.
    \end{cases}
\end{equation*}
By \eqref{u explicit}, we have
\begin{equation}
    \underset{\partial\Omega_\eta}{\sup}(|u_m|+|\nabla u_m|)\leq C_2,
\end{equation}
and by the trace theorem (see \cite[Theorem 3.12.12]{Ve}), we obtain
\begin{equation}
    \Vert\omega_m\Vert_{H^1(\Omega_\eta)}\leq C_3,
\end{equation}
where $C_2$ is a positive constant that depends on $\alpha,n,p,R,\mathcal{E}_1,\mathcal{E}_2,k$ and $E$ only, and $C_3$ depends on $\eta_0,\eta,n,m,R,L$ and $r_0$ only. By setting
\begin{equation}
    u^{loc}_m =u_m+\omega_{loc},\quad\textnormal{in}\quad\Omega_\eta,
\end{equation}
$u^{loc}_m$ satisfies \eqref{eqn: sing eqn} and \eqref{eqn: extra req for sing}. Moreover, by a standard interior regularity estimate \cite[Lemma 6.2.6]{Morreybook}, we have
\begin{equation}
    \Vert \omega_{loc}\Vert_{W^{2,p}(B_{\eta/4}(z))}\leq C,
\end{equation}
where $C$ depends only on $\eta_0,\eta,m,n,R,L$ and $r_0$. Setting $v=w+\omega_{loc}$, we have that $u^{loc}_m$ takes the form \eqref{eqn: u sing} and $v$ satisfies \eqref{eqn: sing R1}-\eqref{eqn: sing R2}.
\end{proof}


\section{Proof of Main Results}\label{sec4}
We start by recalling the following regularity result for $A(\cdot,a(\cdot))$, the proof of which can be found in \cite[Lemma 3.6]{A-G1}.


\begin{lemma}\label{A Sobolev Norm lemma}
    Under the hypotheses of Theorem \ref{thm 1}, we have
    \begin{equation}
        A\big(\cdot,a(\cdot)\big)\in W^{1,p}\big(\Omega,Sym_n\big),
    \end{equation}
    and furthermore,
    \begin{equation}
        \big\Vert A\big(\cdot,a(\cdot)\big)\big\Vert_{W^{1,p}(\Omega)}\leq CE\big(1+\Vert a\Vert_{W^{1,p}(\Omega)}\big),
    \end{equation}
    where $C$ is a positive constant depending only on $\lambda,\Omega,n$, and $p$.
\end{lemma}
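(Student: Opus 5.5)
The statement is a chain-rule (composition) estimate: a matrix function $A(x,t)$ that is $W^{1,p}$ jointly in $(x,t)$, and whose $t$-derivative is also $W^{1,p}$, is composed with a bounded $W^{1,p}$ function $a(x)$ taking values in $[\lambda^{-1},\lambda]$. I would treat the real and imaginary parts $A_R$ and $A_I$ separately; since they are real symmetric, symmetry of the composition is automatic. The plan has three steps: first bound $A(\cdot,a(\cdot))$ in $L^p$, then identify and bound its weak gradient by a chain rule, and finally justify that chain rule by approximation.

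\textbf{Step 1: the $L^p$ bound.} Write
\[
A(x,a(x))=A(x,\lambda^{-1})+\int_{\lambda^{-1}}^{a(x)} D_tA(x,s)\,ds .
\]
This gives the pointwise bound
\[
|A(x,a(x))|\le |A(x,\lambda^{-1})|+\int_{\lambda^{-1}}^{\lambda}|D_tA(x,s)|\,ds .
\]
Taking $L^p(\Omega)$ norms and using Minkowski's integral inequality yields $\|A(\cdot,a)\|_{L^p}\le (1+\lambda)E$, by \eqref{eqn: bound on norms of A}. A technical point: the bound in \eqref{eqn: bound on norms of A} is an essential supremum in $t$, so evaluating at the single slice $t=\lambda^{-1}$ needs care. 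Since $D_tA\in W^{1,p}$, the map $t\mapsto A(\cdot,t)\in W^{1,p}(\Omega)$ is continuous, so the bound holds for every $t$.

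\textbf{Step 2: the gradient bound.} For smooth $A$ and $a$, the chain rule gives
\[
\partial_{x_j}\bigl[A(x,a(x))\bigr]=(\partial_{x_j}A)(x,a(x))+(D_tA)(x,a(x))\,\partial_j a(x).
\]
The first term is controlled exactly as in Step 1, applying the same integral representation to $D_xA$ with $D_tD_xA$ as integrand. This gives an $L^p$ bound of order $(1+\lambda)E$. For the second term, note that $D_tA\in W^{1,p}$ with $p>n$, so in the $x$ variables it is bounded. More precisely, the slices $D_tA(\cdot,t)$ are uniformly bounded in $W^{1,p}(\Omega)$ by the $E$-bound on $D_tA$ and $D_tD_xA$. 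Morrey's embedding on the Lipschitz domain $\Omega$ then gives
\[
\sup_{x,t}|D_tA(x,t)|\le C(\Omega,n,p)E .
\]
Hence $\|(D_tA)(\cdot,a)\,\nabla a\|_{L^p}\le CE\|a\|_{W^{1,p}}$. Adding the two pieces gives the claimed bound $CE(1+\|a\|_{W^{1,p}})$.

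\textbf{Step 3: justifying the chain rule.} I would mollify $a$ to get $a_\epsilon\to a$ in $W^{1,p}$ and a.e., keeping the range inside $[\lambda^{-1},\lambda]$; a convex combination of values in the interval stays in the interval. I would also mollify $A$ in $(x,t)$. The Step 2 formula holds for the smooth pairs with uniform bounds. Passing to the limit uses continuity of $A$ and $D_tA$ (again from the embedding, now in all $n+1$ variables since $p>n+1$ is not assumed; where needed, use the slice argument) together with dominated convergence. The limit identifies the weak derivative, and lower semicontinuity of the norm transfers the bound.

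I expect the main obstacle to be this measure-theoretic justification: composing a merely Sobolev function of $(x,t)$ with $t=a(x)$ evaluates $A$ on a null set of $\Omega\times[\lambda^{-1},\lambda]$, namely the graph of $a$. To avoid ambiguity I would work with the continuous-in-$t$ representative from the slice argument in Step 1. Following \cite[Lemma 3.6]{A-G1}, this is where the dependence of $C$ on $\Omega$ (through the embedding constants) enters.
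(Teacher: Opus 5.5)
Your proposal is correct and takes essentially the same route as the proof the paper relies on. The paper itself only cites \cite[Lemma 3.6]{A-G1}, and in the proof of Theorem \ref{thm 1} it uses the same absolute-continuity representation $A(x,a(x))=A(x,\lambda)-\int_{a(x)}^{\lambda}D_tA(x,t)\,dt$. The shared ingredients are the integral representation in $t$, the chain rule $D_x[A(x,a(x))]=(D_xA)(x,a(x))+(D_tA)(x,a(x))\,Da(x)$ with the first term controlled through $D_tD_xA$, and an $L^\infty$ bound on $D_tA$ from Morrey's embedding on $t$-slices, which is where the dependence of $C$ on $\lambda,\Omega,n,p$ enters. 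Your density step is only sketched, but it can be closed as you indicate: work with the $t$-continuous representatives and pass to the limit by dominated convergence, e.g.\ on subdomains $\Omega'\Subset\Omega$, where mollifying $a$ keeps its values in $[\lambda^{-1},\lambda]$.
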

Next, we recall that, since the boundary $\partial\Omega$ is of Lipschitz class, the normal unit vector field might not be defined on $\partial\Omega$. We therefore introduce a unitary vector field $\widetilde{\nu}$ locally defined near $\partial\Omega$ such that $\widetilde{\nu}$ is $C^\infty$ smooth, non-tangential to $\partial\Omega$ and points to the exterior of $\Omega$ (see \cite{A-G1} for a precise construction of $\widetilde{\nu}$).

We also recall that for $x^0\in\overline{\Sigma}_\eta$, the point $z_\tau=x^0+\tau\widetilde{\nu}\in\Omega_\eta\backslash\overline{\Omega}$ and satisfies
\begin{equation}
    \label{eqn: bound distance}
    C\tau\leq d(z_\tau,\partial\Omega)\leq\tau,\quad\textrm{for any}\quad\tau,\quad0\leq\tau\leq\tau_0,
\end{equation}
where $\tau_0$ and $C$ are positive constants depending on $L$ and $r_0$ only \cite[Lemma 2.2]{A-G1}.
\begin{remark} Several constants depending on the set of \textit{a-priori} data introduced in Definition \ref{apriori data} will appear in the paper. In order to simplify our notation, we will often denote by $C$ any of these constants, avoiding in most cases to point out their specific dependence on the \textit{a-priori} data.
\end{remark}


\begin{proof}[Proof of Theorem \ref{thm 1}]
We recall from \eqref{eqn: Aless I} that
\begin{equation*}
    \Big\langle\big(\Lambda_{a_1}^\Sigma-\Lambda_{a_2}^\Sigma\big)u_1,\overline{u_2}\Big\rangle=\int_\Omega\Big(A\big(x,a_1(x)\big)-A\big(x,a_2(x)\big)\Big)\nabla u_1\cdot \nabla u_2\; dx,
\end{equation*}
for any $u_i\in H^1(\Omega)$ solution to
\begin{equation}
    \label{eqn: eqn 1}
    \textrm{div}\big(A\big(x,a_i(x)\big)\nabla u_i\big)=0,\quad\textrm{for}\quad x\in\Omega, \quad\textrm{for}\quad i=1,2.
\end{equation}
Let $x^0\in\overline{\Sigma}_\eta$ be such that
\[(a_1-a_2)(x^0)=\Vert a_1-a_2\Vert_{L^\infty(\overline{\Sigma}_\eta)}.\]
Setting $z_{\tau} = x^0 + \tau\tilde\nu$, with $0<\tau\leq\min\left\{\tau_0,\: \frac{\eta}{8}\right\}$ and fixing $m=0$, we consider $u^{loc}_{i;\:0}\in W^{2,p}(\Omega)$ the singular solution \eqref{eqn: u sing} introduced in Theorem \ref{sing thm}, having a singularity at $z=z_\tau$, corresponding to the conductivity $A(\cdot,\:a_i(\cdot))$, for $i=1,2$. To ease our notation, we will simply denote
\begin{equation}
u^{loc}_{i;\:0} = u^{loc}_i,\qquad\text{for}\quad i=1,2,
\end{equation}
hence
\begin{equation}
    \begin{gathered}
    \label{eqn: Solns}
    u^{loc}_1(x)=\Big(A^{-1}\big(z_\tau,a_1(z_\tau)\big)(x-z_\tau)\cdot(x-z_\tau)\Big)^{\frac{2-n}{2}}+O(|x-z_\tau|^{2-n+\alpha}),\\
    u^{loc}_2(x)=\Big(A^{-1}\big(z_\tau,a_2(z_\tau)\big)(x-z_\tau)\cdot(x-z_\tau)\Big)^{\frac{2-n}{2}}+O(|x-z_\tau|^{2-n+\alpha}).
    \end{gathered}
\end{equation} 
By fixing $\rho>0$ and possibly reducing $\tau$, such that $0<\tau\leq\min\left\{\tau_0,\: \frac{\eta}{8},\:\frac{\rho}{2}\right\}$, we have that $B_\rho(z_\tau)\cap\Omega\neq\emptyset$ and $B_\rho(z_\tau)\cap\Omega\subset U_{\eta}$. From \eqref{eqn: Aless I}, we have
\begin{align}
    \label{eqn: I1}
        \bigg|\int_{B_\rho(z_\tau)\cap\Omega}&\Big(A\big(x,a_1(x)\big)-A\big(x,a_2(x)\big)\Big)\nabla u^{loc}_1\cdot \nabla u^{loc}_2\;dx\bigg|\nonumber\\
        \leq&\left|\int_{\Omega\backslash B_\rho(z_\tau)}\Big(A\big(x,a_1(x)\big)-A\big(x,a_2(x)\big)\Big)\nabla u^{loc}_1\cdot \nabla u^{loc}_2\;dx\right|\nonumber\\
        &+\big\Vert\Lambda_{a_1}^\Sigma-\Lambda_{a_2}^\Sigma\big\Vert_\ast\Vert u^{loc}_1\Vert_{H^{\frac{1}{2}}_{00}(\Sigma)}\Vert \overline{u^{loc}_2}\Vert_{H^{\frac{1}{2}}_{00}(\Sigma)}.
\end{align}
Recalling that for $i=1,2$, the real and imaginary parts of $A^{-1}$ satisfy \eqref{eqn: Re A inv bound}, and \eqref{eqn: AI inv bound1} or \eqref{eqn: AI inv bound2}, respectively, we have
\begin{equation}
    \label{eqn: ellip for A-1}
    C^{-1}|\xi|^2\leq|A^{-1}\big(x,a_i(x)\big)\xi\cdot\xi|\leq C|\xi|^2,\quad\textrm{for a.e.}\quad x\in\Omega,\quad\textrm{for every}\quad\xi\in\mathbb{R}^n.
\end{equation}
By combining \eqref{eqn: I1} together with \eqref{eqn: Solns},\eqref{eqn: ellip for A-1} and by Theorem \ref{sing thm},
\begin{align}
    \label{eqn: I2}
    \bigg|\int_{B_\rho(z_\tau)\cap\Omega}&\Big(A\big(x,a_1(x)\big)-A\big(x,a_2(x)\big)\Big)\nabla u^{loc}_1\cdot \nabla u^{loc}_2\; dx \bigg|\nonumber\\
    &\leq C\int_{\Omega\backslash B_\rho(z_\tau)}|x-z_\tau|^{2-2n}\; dx+\big\Vert\Lambda_{a_1}^\Sigma-\Lambda_{a_2}^\Sigma\big\Vert_\ast\Vert u^{loc}_1\Vert_{H^{\frac{1}{2}}_{00}(\Sigma)}\Vert \overline{u^{loc}_2}\Vert_{H^{\frac{1}{2}}_{00}(\Sigma)}.
    \end{align}
    The left-hand side of \eqref{eqn: I2} can be estimated from below by recalling that $A$ is H\"older continuous on $\overline{\Omega}$ with exponent of $\beta=1-n/p$ which leads to
\begin{align}
    \label{eqn: A Lips}
    \bigg|\int_{B_\rho(z_\tau)\cap\Omega}\Big(A\big(x,a_1&(x)\big)-A\big(x,a_2(x)\big)\Big)\nabla u^{loc}_1\cdot \nabla u^{loc}_2\; dx\bigg|\nonumber\\
\geq&\bigg|\int_{B_\rho(z_\tau)\cap\Omega}\Big(A\big(x^0,a_1(x^0)\big)-A\big(x^0,a_2(x^0)\big)\Big)\nabla u^{loc}_1\cdot \nabla u^{loc}_2\; dx\bigg|\nonumber\\&-C\int_{B_\rho(z_\tau)\cap\Omega}|x-x^0|^\beta|x-z_\tau|^{2-2n}\; dx.
\end{align}
Hence, combining \eqref{eqn: A Lips} together with \eqref{eqn: I2}, we obtain
\begin{align}
    \label{eqn: I3}
\bigg|\int_{B_\rho(z_\tau)\cap\Omega}\!&\Big(A\big(x^0,a_1(x^0)\big)\!-\!A\big(x^0,a_2(x^0)\big)\Big)\nabla u^{loc}_1(x)\!\cdot\! \nabla u^{loc}_2(x)\; dx\bigg|\nonumber\\
    \leq&\; C\biggl\{\int_{B_\rho(z_\tau)\cap\Omega}|x-x^0|^\beta|x-z_\tau|^{2-2n}\;dx+\int_{\Omega\backslash B_\rho(z_\tau)}|x-z_\tau|^{2-2n}\;dx\biggr\}\nonumber\\
    &+\big\Vert\Lambda_{a_1}^\Sigma-\Lambda_{a_2}^\Sigma\big\Vert_\ast\Vert u_1^{loc}\Vert_{H^{\frac{1}{2}}_{00}(\Sigma)}\Vert \overline{u_2^{loc}}\Vert_{H^{\frac{1}{2}}_{00}(\Sigma)}.
\end{align}
Recalling \eqref{eqn: ellip for A-1}, we can estimate the left-hand side of \eqref{eqn: I3} from below as 
\begin{align}
    \label{eqn: LHS lower}
\Bigg|\!\int_{B_\rho(z_\tau)\cap\Omega}\!\!\!\!\!\!\!\!&\frac{A^{-1}\!\big(\!z_\tau\, a_2(z_\tau)\!\big)\!\Big(\!A\big(\!x^0\!,a_1(x^0)\!\big)\!\!-\!\!A\big(\!x^0\!,a_2(x^0)\!\big)\!\!\Big)\!A^{-1}\!\big(\!z_\tau, a_1(z_\tau)\!\big)\!(x\!-\!z_\tau)\!\cdot\!(x\!-\!z_\tau)}{\big(\!A^{-1}\!\big(\!z_\tau, a_1(z_\tau)\!\big)\!(x\!-\!z_\tau)\!\cdot\!(x\!-\!z_\tau)\!\big)\!^{\frac{n}{2}}\big(\!A^{-1}\!\big(\!z_\tau, a_2(z_\tau)\!\big)\!(x\!-\!z_\tau)\!\cdot\!(x\!-\!z_\tau)\!\big)\!^{\frac{n}{2}}}dx\Bigg|\nonumber\\
    -&C\biggl\{\int_{B_\rho(z_\tau)\cap\Omega}|x-z_\tau|^{2-2n+\alpha}\;dx+\int_{B_\rho(z_\tau)\cap\Omega}|x-z_\tau|^{2-2n+2\alpha}\;dx\biggr\}\nonumber\\
    &\leq\bigg|\int_{B_\rho(z_\tau)\cap\Omega}\Big(A\big(x^0,a_1(x^0)\big)-A\big(x^0,a_2(x^0)\big)\Big)\nabla u^{loc}_1\cdot \nabla u^{loc}_2\; dx\bigg|.
\end{align}
\eqref{eqn: LHS lower} together with \eqref{eqn: I3} leads to
\begin{align}
    \label{eqn: I4}   
\Bigg|\!\int_{B_\rho(z_\tau)\cap\Omega}\!\!\!\!\!\!\!\!&\frac{A^{-1}\!\big(\!z_\tau\, a_2(z_\tau)\!\big)\!\Big(\!A\big(\!x^0\!,a_1(x^0)\!\big)\!\!-\!\!A\big(\!x^0\!,a_2(x^0)\!\big)\!\!\Big)\!A^{-1}\!\big(\!z_\tau, a_1(z_\tau)\!\big)\!(x\!-\!z_\tau)\!\cdot\!(x\!-\!z_\tau)}{\big(\!A^{-1}\!\big(\!z_\tau, a_1(z_\tau)\!\big)\!(x\!-\!z_\tau)\!\cdot\!(x\!-\!z_\tau)\!\big)\!^{\frac{n}{2}}\big(\!A^{-1}\!\big(\!z_\tau, a_2(z_\tau)\!\big)\!(x\!-\!z_\tau)\!\cdot\!(x\!-\!z_\tau)\!\big)\!^{\frac{n}{2}}}dx\Bigg|\nonumber\\
    \leq& C\biggl\{\int_{B_\rho(z_\tau)\cap\Omega}\!\!\!|x-z_\tau|^{2-2n+\alpha}\;dx+\int_{B_\rho(z_\tau)\cap\Omega}\!\!\!\!|x-x^0|^\beta|x-z_\tau|^{2-2n}\;dx\nonumber\\
     &+\int_{\Omega\backslash B_\rho(z_\tau)}\!\!\!\!\!|x-z_\tau|^{2-2n}\;dx\biggr\}+\big\Vert\Lambda_{a_1}^\Sigma-\Lambda_{a_2}^\Sigma\big\Vert_\ast\Vert u^{loc}_1\Vert_{H^{\frac{1}{2}}_{00}(\Sigma)}\Vert \overline{u^{loc}_2}\Vert_{H^{\frac{1}{2}}_{00}(\Sigma)}.  
\end{align}
Next, we estimate from below the left-hand side of \eqref{eqn: I4} as follows,
\begin{align}
    \label{eqn: A-1 Lips}
\Bigg|\!\int_{B_\rho(\!z_\tau\!)\cap\Omega}\!\!\!\!\!\!\!\!&\frac{A^{-1}\!\big(\!z_\tau\, a_2(z_\tau)\!\big)\!\Big(\!A\big(\!x^0\!,a_1(x^0)\!\big)\!\!-\!\!A\big(\!x^0\!,a_2(x^0)\!\big)\!\!\Big)\!A^{-1}\!\big(\!z_\tau, a_1(z_\tau)\!\big)\!(x\!-\!z_\tau)\!\cdot\!(x\!-\!z_\tau)}{\big(\!A^{-1}\!\big(\!z_\tau, a_1(z_\tau)\!\big)\!(x\!-\!z_\tau)\!\cdot\!(x\!-\!z_\tau)\!\big)\!^{\frac{n}{2}}\big(\!A^{-1}\!\big(\!z_\tau, a_2(z_\tau)\!\big)\!(x\!-\!z_\tau)\!\cdot\!(x\!-\!z_\tau)\!\big)\!^{\frac{n}{2}}}dx\Bigg|\nonumber\\
    \geq&\Bigg|\!\int_{B_\rho(\!z_\tau\!)\cap\Omega}\!\!\frac{\Big(A^{-1}\big(x^0,a_2(x^0)\big)\!-\!A^{-1}\big(x^0,a_1(x^0)\big)\Big)(x-z_\tau)\!\cdot\!(x-z_\tau)}{\big(\!A^{-1}\!\big(\!z_\tau, a_1(z_\tau)\!\big)\!(x\!-\!z_\tau)\!\cdot\!(x\!-\!z_\tau)\!\big)\!^{\frac{n}{2}}\!\big(\!A^{-1}\!\big(\!z_\tau, a_2(z_\tau)\!\big)\!(x\!-\!z_\tau)\!\cdot\!(x\!-\!z_\tau)\!\big)\!^{\frac{n}{2}}}\;\!dx\Bigg|\nonumber\\
    &\qquad-C\int_{B_\rho(z_\tau)\cap\Omega}|x-z_\tau|^{2-2n+\beta}\;dx.
\end{align}
Combining \eqref{eqn: A-1 Lips} with \eqref{eqn: I4} leads to
\begin{align}
    \label{eqn: I5}
\Bigg|\int_{B_\rho(z_\tau)\cap\Omega}&\!\frac{\Big(A^{-1}\big(x^0,a_2(x^0)\big)-A^{-1}\big(x^0,a_1(x^0)\big)\Big)(x-z_\tau)\cdot(x-z_\tau)}{\big(\!A^{-1}\!\big(z_\tau, a_1(z_\tau)\big)\!(x\!-\!z_\tau)\!\cdot\!(x\!-\!z_\tau)\!\big)\!^{\frac{n}{2}}\!\big(\!A^{-1}\!\big(z_\tau, a_2(z_\tau)\big)\!(x\!-\!z_\tau)\!\cdot\!(x\!-\!z_\tau)\!\big)\!^{\frac{n}{2}}}\;dx\Bigg|\nonumber\\
    \leq&C\Biggl\{\int_{\Omega\backslash B_\rho(z_\tau)}|x-z_\tau|^{2-2n}\;dx+\int_{B_\rho(z_\tau)\cap\Omega}|x-z_\tau|^{2-2n+\beta}\;dx\nonumber\\
    &+\int_{B_\rho(z_\tau)\cap\Omega}|x-z_\tau|^{2-2n+\alpha}\;dx+\int_{B_\rho(z_\tau)\cap\Omega}|x-x^0|^\beta|x-z_\tau|^{2-2n}\;dx\Biggr\}\nonumber\\
    &+\big\Vert\Lambda_{a_1}^\Sigma-\Lambda_{a_2}^\Sigma\big\Vert_\ast\Vert u_1^{loc}\Vert_{H^{\frac{1}{2}}_{00}(\Sigma)}\Vert \overline{u_2^{loc}}\Vert_{H^{\frac{1}{2}}_{00}(\Sigma)}.
\end{align}
The integrand appearing on the left-hand side of \eqref{eqn: I5} can be expressed as 
\begin{equation}
    \label{eqn: complex frac}
    \frac{F(x)}{\big|A^{-1}\big(z_\tau, a_1(z_\tau)\big)(x-z_\tau)\cdot(x-z_\tau)\big|^n\big|A^{-1}\big(z_\tau, a_2(z_\tau)\big)(x-z_\tau)\cdot(x-z_\tau)\big|^n},
\end{equation}
where the complex-valued function $F$ is defined by
\begin{align}
    \label{eqn: F}
     F(x)=&\Big(A^{-1}\big(x^0,a_2(x^0)\big)-A^{-1}\big(x^0,a_1(x^0)\big)\Big)(x-z_\tau)\!\cdot\!(x-z_\tau)\nonumber\\
    &\times\Big(\overline{A^{-1}\big(z_\tau, a_1(z_\tau)\big)}(x-z_\tau)\!\cdot\!(x-z_\tau)\Big)^{\frac{n}{2}}\!\Big(\overline{A^{-1}\big(z_\tau, a_2(z_\tau)\big)}(x-z_\tau)\!\cdot\!(x-z_\tau)\Big)^{\frac{n}{2}}\!.
\end{align}
The choice of $k$ in \eqref{eqn: k1} implies
\begin{equation}
     \label{eqn: F assumptions}
    |\Im F(x)|\leq|\Re F(x)|,\qquad \Re F(x)>0.
\end{equation}
Using \eqref{eqn: F assumptions}, the left-hand side of inequality \eqref{eqn: I5} can be estimated from below as
\begin{align}
    \label{eqn: Using F ass}
    \Bigg|\!&\int_{B_\rho(\!z_\tau\!)\cap\Omega}\!\!\frac{F(x)}{\big|A^{-1}\!\big(z_\tau, a_1(z_\tau)\big)\!(x\!-\!z_\tau)\!\cdot\!(x\!-\!z_\tau)\!\big|^n \big|A^{-1}\!\big(z_\tau, a_2(z_\tau)\big)\!(x\!-\!z_\tau)\!\cdot\!(x\!-\!z_\tau)\!\big|^n}\;dx\Bigg|\nonumber\\
    &\geq\! \Re\!\Biggl\{\!\int_{B_\rho(\!z_\tau\!)\cap\Omega}\!\!\frac{F(x)}{\big|\!A^{-1}\!\big(\!z_\tau, a_1(z_\tau)\!\big)\!(x\!-\!z_\tau)\!\cdot\!(x\!-\!z_\tau)\!\big|^n\! \big|\!A^{-1}\!\big(\!z_\tau, a_2(z_\tau)\!\big)\!(x\!-\!z_\tau)\!\cdot\!(x\!-\!z_\tau)\!\big|^n}\;dx\!\!\Biggr\}\nonumber\\
    &\geq\!\frac{1}{\sqrt{2}}\!\int_{B_\rho(\!z_\tau\!)\cap\Omega}\!\!\frac{|F(x)|}{\big|\!A^{-1}\!\big(\!z_\tau, a_1(z_\tau)\!\big)\!(x\!-\!z_\tau)\!\cdot\!(x\!-\!z_\tau)\!\big|^n\! \big|\!A^{-1}\!\big(\!z_\tau, a_2(z_\tau)\!\big)\!(x\!-\!z_\tau)\!\cdot\!(x\!-\!z_\tau)\!\big|^n}\;dx.
\end{align}
Using \eqref{eqn: ellip for A-1} and the monotonicity condition \eqref{eqn: mono}, we compute
\begin{align}
    \label{eqn: F norm}
    |F(x)|\geq&\Re\biggl\{\int_{a_2(x^0)}^{a_1(x^0)}D_tA(x^0,t)A(x^0,t)^{-1}(x-z_\tau)\!\cdot\! A(x^0,t)^{-1}(x-z_\tau)\;dt\biggr\}\nonumber\\
    &\times\Big|\overline{A^{-1}\big(z_\tau, a_1(z_\tau)\big)}(x-z_\tau)\!\cdot\!(x-z_\tau)\Big|^{\frac{n}{2}}\Big|\overline{A^{-1}\big(z_\tau, a_2(z_\tau)\big)}(x-z_\tau)\!\cdot\!(x-z_\tau)\Big|^{\frac{n}{2}} \nonumber\\
    \geq& C\big(a_1(x^0)-a_2(x^0)\big)|x-z_\tau|^{2+2n}.
\end{align}
By combining \eqref{eqn: Using F ass} and \eqref{eqn: F norm} with \eqref{eqn: I5}, we obtain
\begin{align}
    \label{eqn: I6}
    \big(a_1(x^0)-a_2(x^0)&\big)\int_{B_\rho(z_\tau)\cap\Omega}|x-z_\tau|^{2-2n}\;dx\nonumber\\
    \leq& C\Biggl\{\int_{\Omega\backslash B_\rho(z_\tau)}|x-z_\tau|^{2-2n}\;dx+\int_{B_\rho(z_\tau)\cap\Omega}|x-z_\tau|^{2-2n+\beta}\;dx\nonumber\\
     &+\int_{B_\rho(z_\tau)\cap\Omega}|x-z_\tau|^{2-2n+\alpha}\;dx+\int_{B_\rho(z_\tau)\cap\Omega}|x-x^0|^\beta|x-z_\tau|^{2-2n}\;dx\nonumber\\
     &+\big\Vert\Lambda_{a_1}^\Sigma-\Lambda_{a_2}^\Sigma\big\Vert_\ast\Vert u^{loc}_1\Vert_{H^{\frac{1}{2}}_{00}(\Sigma)}\Vert \overline{u^{loc}_2}\Vert_{H^{\frac{1}{2}}_{00}(\Sigma)}\Biggr\}.
\end{align}
By estimating the integrals in \eqref{eqn: I6} and the $H^{\frac{1}{2}}_{00}(\Sigma)$ norms of $u^{loc}_1$ and $\overline{u^{loc}_2}$, we obtain
\begin{equation}
    \label{eqn: Final 1}
    \Vert a_1-a_2\Vert_{L^\infty(\overline{\Sigma}_\eta)}\tau^{2-n}\leq C\Big\{C\tau^{2-n+\beta}+C\tau^{2-n+\alpha}+C+\tau^{2-n}\big\Vert\Lambda_{a_1}^\Sigma-\Lambda_{a_2}^\Sigma\big\Vert_\ast\Big\},
\end{equation}
Therefore,
\begin{equation}
    \label{eqn: Final 2}
    \Vert a_1-a_2\Vert_{L^\infty(\overline{\Sigma}_\eta)}\leq C\Big\{\mu(\tau)+\big\Vert\Lambda_{a_1}^\Sigma-\Lambda_{a_2}^\Sigma\big\Vert_\ast\Big\},
\end{equation}
where $\mu(\tau)\rightarrow0$ as $\tau\rightarrow0$, hence, from \eqref{eqn: Final 2} we obtain 
\begin{equation}
    \label{eqn: pre result}
    \Vert a_1-a_2\Vert_{L^\infty(\overline{\Sigma}_\eta)}\leq C\big\Vert\Lambda_{a_1}^\Sigma-\Lambda_{a_2}^\Sigma\big\Vert_\ast.
\end{equation}
Recalling that, for a.e. $x\in\Omega$, the function
\[t\longrightarrow A(x,t)\]
is absolutely continuous on $[\lambda^{-1},\lambda]$, we have
\begin{equation*}
    A\big(x,a_1(x)\big)=A\big(x,\lambda\big)-\int_{a_1(x)}^\lambda D_tA(x,t)\;dt,
\end{equation*}
for a.e. $x\in\Omega$ (see \cite[Lemma 3.1.1]{Morreybook}). Therefore, for every $x\in\overline{\Omega}$, we have
\begin{align}\label{A integral a}
    \big|A\big(x,a_1(x)\big)-A\big(x,a_2(x)\big)\big|= &\Big|\int_{a_2(x)}^{a_1(x)}D_tA(x,t)\;dt\Big|\nonumber\\
    \leq&\int_{a_2(x)}^{a_1(x)}\textrm{Sup}_{t,x}|D_tA(x,t)|\;dt\nonumber\\
    \leq&C|a_1(x)-a_2(x)|.
\end{align} 
Taking the $L^\infty$-norm on both sides of \eqref{A integral a}, we obtain
\begin{equation}
    \label{eqn: pre 2}
    \Vert A(x,a_1(x))-A(x,a_2(x))\Vert_{L^\infty(\overline{\Sigma}_\eta)}\leq C\Vert a_1(x)-a_2(x)\Vert_{L^\infty(\overline{\Sigma}_\eta)}.
\end{equation}
Combining \eqref{eqn: pre 2} with \eqref{eqn: pre result} concludes the proof.
\end{proof}
Next, for the proof of Theorem \ref{der thm} we also need to control $Du^{loc}_m$ near the singularity $z$ from below. This is achieved in the following Lemma. Without loss of generality, we set $z=0$. 


\begin{lemma}\label{Lem 4.1}
    Let the hypothesis of Theorem \ref{sing thm} be satisfied. Then, for any $m=0,1,2,..$, the singular solution $u^{loc}_m$ in \eqref{eqn: u sing} having an isolated singularity at $z=0$ also satisfies
    \begin{equation}\label{gradient from below}
        |Du^{loc}_m(x)|>C|x|^{1-(n+m)},\quad\textrm{for every}\quad x\in\Omega,\quad0<|x|\leq r_1,
    \end{equation}
    where $C$ and $r_1$ are positive constants depending on $n,p,m,\mathcal{E}_1,\mathcal{E}_2,k, \Omega$ and $E$.
\end{lemma}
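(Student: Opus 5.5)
The plan is to split $u^{loc}_m = H_m + v$, where $H_m$ denotes the explicit leading term in \eqref{eqn: u sing}. We then show that $|DH_m(x)|\geq c|x|^{1-n-m}$ with $c>0$ independent of $x$. Combining this with \eqref{eqn: sing R1}, which gives $|Dv(x)|\leq C|x|^{1-n-m+\alpha}$, we get
\[
|Du^{loc}_m(x)|\geq c|x|^{1-n-m}-C|x|^{1-n-m+\alpha}\geq \tfrac{c}{2}|x|^{1-n-m}
\]
for $0<|x|\leq r_1$, where $r_1=\min\{\eta/4,(c/2C)^{1/\alpha}\}$. So the whole matter reduces to a lower bound on the gradient of the homogeneous leading term.

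\textbf{Homogeneity step.} Write $B=A^{-1}(0,a(0))$, a complex symmetric matrix. By \eqref{eqn: ellip for A-1}, $|Bx\cdot x|\geq C^{-1}|x|^2$, and $\Re(Bx\cdot x)>0$, so the principal branches used in \eqref{eqn: u sing} are well defined and smooth on $\mathbb{R}^n\setminus\{0\}$. Then $H_m$ is positively homogeneous of degree $2-n-m$, and $DH_m$ is homogeneous of degree $1-n-m$. Hence
\[
|DH_m(x)|=|x|^{1-n-m}\,|DH_m(x/|x|)|,
\]
and it suffices to show that $DH_m$ does not vanish on the unit sphere $S^{n-1}$. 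Compactness of $S^{n-1}$ and continuity then give a positive minimum $c$. Its dependence only on $n,m,\mathcal{E}_1,\mathcal{E}_2,k$ follows because $B$ ranges over a compact set of admissible matrices (those satisfying \eqref{eqn: Re A inv bound}, \eqref{eqn: AI inv bound1}/\eqref{eqn: AI inv bound2} and the commutation of $A_R,A_I$), on which the minimum depends continuously.

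\textbf{Non-vanishing on the sphere (the main obstacle).} For this I would use Euler's identity $x\cdot DH_m(x)=(2-n-m)H_m(x)$. This shows $DH_m(x)\neq0$ wherever $H_m(x)\neq0$, since $2-n-m\neq0$ for $n\geq3$. The difficulty is the zero set of $H_m$, i.e. where the Gegenbauer factor vanishes. Here I would use that $H_m$ is obtained from the real harmonic function
\[
h_m(y)=|y|^{2-n-m}\,m!\,C_m^{\frac{n-2}{2}}(y_n/|y|)
\]
via a linear change of variables $y=Sx$ with $S^TS=B$, where $S$ is a complex symmetric square root (available since $B$ is complex symmetric with positive definite real part, and $A_R,A_I$ commute). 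Up to constants, $h_m$ is $\partial_{y_n}^m|y|^{2-n}$. Then $DH_m(x)=S^T(Dh_m)(Sx)$ with $S$ invertible, so it is enough to show $Dh_m(Sx)\neq0$ for $x\neq0$.

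On real points this is classical: $h_m$ is a nonzero homogeneous harmonic function, and its gradient cannot vanish at a point where $h_m=0$. At such a point, homogeneity together with harmonicity would force too high an order of vanishing, which one sees from the Gegenbauer ODE. In terms of the variable $s=y_n/|y|$, one has $Dh_m=0$ exactly when $C_m(s)=0$ and $(1-s^2)C_m'(s)=0$. The Gegenbauer ODE has simple zeros off $s=\pm1$, which rules this out.

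For complex $Sx$ I would argue the same way using the explicit formula: $Dh_m=0$ forces $C_m(s)=0$ and $C_m'(s)\,(\text{tangential factor})=0$, where now $s$ is complex. Zeros of $C_m$ are simple (by the ODE uniqueness argument), so the remaining case is the degenerate one $s^2=1$ in the complex sense. This case must be excluded using $\Re(Bx\cdot x)>0$ and the row structure $A^{-1}_n$. This is the delicate point I expect to consume most of the effort; if it fails in general, I would fall back on a smallness condition on $k$, perturbing from the real case $k=0$.
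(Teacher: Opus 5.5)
Your reduction matches the paper's. You absorb $v$ using \eqref{eqn: sing R1} and use homogeneity to reduce to the unit sphere. Your Euler identity $x\cdot DH_m=(2-n-m)H_m$ is the same observation the paper makes when it notes $Dt(x)\cdot x=0$ while $A^{-1}(0,a(0))x\cdot x\neq0$: the gradient of the leading term can only vanish where $C_m^{\frac{n-2}{2}}(t)=0$. The gap is that you stop exactly at the step that completes the proof. You leave open whether the gradient can vanish at a zero of $C_m^{\frac{n-2}{2}}$ in the ``complex $s^2=1$'' configuration, and you propose falling back on a smallness condition on $k$. As written, non-vanishing on $S^{n-1}$ is therefore not established. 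The fallback would give only a $k$-restricted statement, whereas the lemma is stated, and used in Theorem~\ref{der thm}, with no restriction on $k$.

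\textbf{The missing observation.} We have $C_m^{\frac{n-2}{2}}(\pm1)=(\pm1)^m\binom{m+n-3}{m}\neq0$, so the case you flag is vacuous.
\begin{itemize}
\item If $DH_m(x)=0$, then $s:=t(x)$ is a zero of $C_m^{\frac{n-2}{2}}$, hence $s\neq\pm1$.
\item So $s$ is a regular point of the Gegenbauer equation, and $\big(C_m^{\frac{n-2}{2}}\big)'(s)\neq0$ by Cauchy uniqueness, which is valid for complex $s$.
\item $DH_m(x)$ then reduces to a nonzero multiple of $Dt(x)$.
\item It remains to see $Dt(x)\neq0$. Set $B=A^{-1}(0,a(0))$, $Q=Bx\cdot x$, $b=Be_n$. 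Then $Dt(x)=\big(Qb-(b\cdot x)Bx\big)/\big(B_{nn}^{1/2}Q^{3/2}\big)$. This vanishes iff $Qe_n=(b\cdot x)x$, i.e.\ iff $x$ is a real multiple of $e_n$, and there $t=\pm1$.
\item Equivalently, $Dt\cdot B^{-1}Dt=(1-t^2)/Q$, so $Dt=0$ forces $t^2=1$.
\end{itemize}
Only $Q\neq0$ and the invertibility of $B$ are used, so no condition on $k$ enters. This is precisely the paper's argument, which writes $x=e_n$ where $x=\pm e_n$ is meant.

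\textbf{The change of variables.} Your complex change of variables is unnecessary, and as stated it is inaccurate. For a symmetric square root $S$ of $B$, $(Sx)_n/(Sx\cdot Sx)^{1/2}$ coincides with $t(x)$ only if $e_n$ is an eigenvector of $B$. You would need a factor with $S^TS=B$ and $Se_n=B_{nn}^{1/2}e_n$, or you can simply work in the $x$ variables as above.
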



\begin{proof}[Proof of Lemma \ref{Lem 4.1}]
      For a fixed $m$, the singular solution of Theorem \ref{sing thm} having an isolated singularity at $z=0$ takes the form
    \begin{align}\label{eqn: um+v}
        u^{loc}_m(x)=u_m(x)+v(x)=&\big(A^{-1}\big(0,a(0)\big)x\cdot x\big)^{\frac{2-n-m}{2}}m!\Big(A^{-1}_{nn}\big(0,a(0)\big)\Big)^{\frac{m}{2}}\nonumber\\
        &\times C_m^{\frac{n-2}{2}}\Bigg(\frac{A^{-1}_{n}\big(0,a(0)\big)x}{\Big(A^{-1}_{nn}\big(0,a(0)\big)\Big)^{\frac{1}{2}}\Big(A^{-1}\big(0,a(0)\big)x\cdot x\Big)^\frac{1}{2}}\Bigg)+v(x).\nonumber\\
     \end{align}
     To prove \eqref{gradient from below}, we will show that
     \begin{equation}
         \big|Du_m(x)\big|>C|x|^{1-(n+m)},\qquad\textnormal{for}\quad x\in\Omega\cap B_{r_1}(0).
     \end{equation}
     It is a straightforward calculation to show that
     \begin{align}
         Du_m(x)=&m!\Big(A^{-1}_{nn}\big(0,a(0)\big)\Big)^{\frac{m}{2}}\bigg\{\Big(A^{-1}\big(0,a(0)\big)x\cdot x\Big)^\frac{2-n-m}{2}\frac{dC_m^{\frac{n-2}{2}}(t)}{dt}Dt\nonumber\\
        &+(2-n-m)\Big(A^{-1}\big(0,a(0)\big)x\Big)\Big(A^{-1}\big(0,a(0)\big)x\cdot x\Big)^\frac{-n-m}{2}C_m^{\frac{n-2}{2}}(t)\bigg\},
     \end{align}
     where 
     \begin{equation}\label{t}
        t=\frac{A^{-1}_{n}\big(0,a(0)\big)x}{\Big(A^{-1}_{nn}\big(0,a(0)\big)\Big)^{\frac{1}{2}}\Big(A^{-1}\big(0,a(0)\big)x\cdot x\Big)^\frac{1}{2}}.
    \end{equation}
    We have
    \begin{align}
        |Du_m(x)|^2=&(m!)^2\Big|A^{-1}_{nn}\big(0,a(0)\big)\Big|^m\bigg|\Big(A^{-1}\big(0,a(0)\big)x\cdot x\Big)^{\frac{2-n-m}{2}}\frac{dC_m^{\frac{n-2}{2}}(t)}{dt}Dt\nonumber\\
        &+(2-n-m)\Big(A^{-1}\big(0,a(0)\big)x\cdot x\Big)^{\frac{-n-m}{2}}\Big(A^{-1}\big(0,a(0)\big)x\Big)C_m^{\frac{n-2}{2}}(t)\bigg|^2.
    \end{align}
   Setting
    \begin{align}\label{eqn: lemma F def}
        h(x) &=|x|^{2n+2m-2}\bigg|\Big(A^{-1}\big(0,a(0)\big)x\cdot x\Big)^{\frac{2-n-m}{2}}\frac{dC_m^{\frac{n-2}{2}}(t)}{dt}Dt\nonumber\\
        &+(2-n-m)\Big(A^{-1}\big(0,a(0)\big)x\cdot x\Big)^{\frac{-n-m}{2}}\Big(A^{-1}\big(0,a(0)\big)x\Big)C_m^{\frac{n-2}{2}}(t)\bigg|^2,
    \end{align}
    we have that $h(cx)=h(x)$, for any $c\in\mathbb{R}\setminus\{0\}$. Hence $h$ is determined by its restriction on the unit sphere $S^{n-1}$. Due to the continuity of $h$ on $S^{n-1}$, $h$ has a minimum on $S^{n-1}$. Such a minimum is positive since $h(x)\neq0$ for all $x\in S^{n-1}$. In fact, on one hand, $C_m^{\frac{n-2}{2}}(t)$ satisfies
     \begin{equation}
         (1-t^2)y''-(n-1)ty'+m(m+n-2)y=0,
     \end{equation}
     for all $t\in\mathbb{C}$ (see \cite[Section 3.15.2]{highertransfns}). Since $C_m^{\frac{n-2}{2}}(\pm1)\neq0$, by the Cauchy uniqueness theorem, $C_m^{\frac{n-2}{2}}(t)$ and $\frac{dC_m^{\frac{n-2}{2}}(t)}{dt}$ cannot simultaneously vanish. Also, the quantity
     \begin{equation}
         Dt(x)=\frac{\Big(A^{-1}\big(0,a(0)\big)x\cdot x\Big)\Big(A^{-1}_{n}\big(0,a(0)\big)\Big)\!-\!\Big(A^{-1}_{n}\big(0,a(0)\big)x\Big)\Big(A^{-1}\big(0,a(0)\big)x\Big)}{\Big(A^{-1}_{nn}\big(0,a(0)\big)\Big)^{\frac{1}{2}}\Big(A^{-1}\big(0,a(0)\big)x\cdot x\Big)^\frac{3}{2}},
     \end{equation}
     vanishes only when $x=e_n\in S^{n-1}$ (here $e_n$ denotes the $n^{th}$ vector in the canonical basis for $\mathbb{R}^n$). In this case we have that $t$ defined in \eqref{t} is equal to $1$ and $C_m^{\frac{n-2}{2}}(1)\neq 0$. Consequently, 
     \begin{eqnarray}
     & & \Big(A^{-1}\big(0,a(0)\big)x\cdot x\Big)^{\frac{2-n-m}{2}}\frac{dC_m^{\frac{n-2}{2}}(t)}{dt}Dt;\label{h term 1}\\
    & & (2-n-m)\Big(A^{-1}\big(0,a(0)\big)x\cdot x\Big)^{\frac{-n-m}{2}}C_m^{\frac{n-2}{2}}(t)\Big(A^{-1}\big(0,a(0)\big)x\Big)\label{h term 2}
    \end{eqnarray}
     cannot simultaneously vanish. On the other hand the vector fields in \eqref{h term 1} and \eqref{h term 2} cannot be opposite as $Dt(x)$ and $A^{-1}\big(0,a(0)\big)x$ cannot be parallel vector fields, for $x\in S^{n-1}$. In fact, for $x\in S^{n-1}$, $Dt(x)\cdot x=0$, whereas $A^{-1}\big(0,a(0)\big)x\cdot x \neq 0$, due to the ellipticity condition \eqref{eqn: AR assump}. This implies that
     \begin{equation}
         h(x)\geq C>0,\quad\textrm{for all}\quad x\in S^{n-1},
     \end{equation}
concluding the proof.
\end{proof}
Next, we proceed to proving Theorem \ref{der thm}.

\begin{proof}[Proof of Theorem \ref{der thm}]
We start again by recalling \eqref{eqn: Aless I},
\begin{equation}\label{Ale bis}
    \Big\langle\big(\Lambda_{a_1}^\Sigma-\Lambda_{a_2}^\Sigma\big)u_1,\overline{u_2}\Big\rangle=\int_\Omega\Big(A\big(x,a_1(x)\big)-A\big(x,a_2(x)\big)\Big)\nabla u_1\cdot \nabla u_2\; dx,
\end{equation}
for any $u_i\in H^1(\Omega)$ solution to
\begin{equation}
    \textrm{div}(A\big(x,a_i(x)\big)\nabla u_i)=0,\quad\textrm{for}\quad x\in\Omega, \quad\textrm{for}\quad i=1,2.
\end{equation}
We fix an integer $h\geq 1$, we set $x^0\in\overline{\Sigma}_\eta$ such that
\[(-1)^h\frac{\partial^h}{\partial\widetilde{\nu}^h}(a_1-a_2)(x^0)=\bigg\Vert \frac{\partial^h}{\partial\widetilde{\nu}^h}(a_1-a_2)\bigg\Vert_{L^\infty(\overline{\Sigma}_\eta)}\]
and let $z_\tau=x^0+\tau\widetilde{\nu}$, with $0<\tau\leq\tau_0$, where $\tau_0$ is the number fixed in \eqref{eqn: bound distance}. For a fixed integer $m>0$, we consider the singular solutions $u^{loc}_{m,\:i}\in W^{2,p}(\Omega)$ to \eqref{eqn: eqn 1} of Theorem \ref{sing thm}, having an isolated singularity at $z_\tau$, for $i=1,2$. By simply denoting such solution by $u^{loc}_i$, this is given by
\begin{equation}
    u^{loc}_i(x)\!=\!C_i\!\Big(\!A^{-1}\big(z_\tau,a_i(z_\tau)\big)(x-z_\tau)\cdot(x-z_\tau)\Big)\!^{\frac{2-n-m}{2}}\!C_m^{\frac{n-2}{2}}\!(\widetilde{z}_1)+O(|x-z_\tau|^{2-n-m+\alpha}),
   \end{equation}
where $C_i=m!\Big(A^{-1}_{nn}\big(z_\tau,a_i(z_\tau)\big)\Big)^{\frac{m}{2}}$, and
\begin{equation}
    \widetilde{z_i}=\frac{A^{-1}_{n}\big(z_\tau,a_i(z_\tau)\big)(x-z_\tau)}{\Big(A^{-1}_{nn}\big(z_\tau,a_i(z_\tau)\big)\Big)^{\frac{1}{2}}\Big(A^{-1}\big(z_\tau,a_i(z_\tau)\big)(x-z_\tau)\cdot(x-z_\tau)\Big)^\frac{1}{2}},
\end{equation}
for $i=1,2$. Next, we prove that for $j\leq h$, 
\begin{equation}
    \label{eqn: induction aim}
    \bigg\Vert \frac{\partial^j}{\partial\widetilde{\nu}^j}(a_1-a_2)\bigg\Vert_{L^\infty(\overline{\Sigma}_\eta)}\leq C\big\Vert\Lambda_{a_1}^\Sigma-\Lambda_{a_2}^\Sigma\big\Vert_\ast^{\delta_j},
\end{equation} 
by induction on $j$. For $j=0$, \eqref{eqn: induction aim} is given by \eqref{eqn: pre result} in the proof of Theorem \ref{thm 1}. Now assuming \eqref{eqn: induction aim} holds true for all $j$, $j\leq h-1$, we will prove it holds true for $j=h$ too. Arguing as in the proof of Theorem \ref{thm 1}, we fix $\rho>0$ and possibly reducing $\tau$, such that $0<\tau\leq\min\left\{\tau_0,\: \frac{\eta}{8}.\:\frac{\rho}{2}\right\}$, we have that $\Omega\cap B_\rho(z_\tau)\neq\emptyset$ and such that $\Omega\cap B_\rho(z_\tau)\subset U_\eta$. From \eqref{Ale bis}, we obtain
\begin{align}\label{I1 der}
    \big\Vert\Lambda_{a_1}^\Sigma-\Lambda_{a_2}^\Sigma\big\Vert_\ast &\Vert u^{loc}_1\Vert_{H^{\frac{1}{2}}_{00}(\Sigma)}\Vert \overline{u^{loc}_2}\Vert_{H^{\frac{1}{2}}_{00}(\Sigma)}\nonumber\\
    \geq &\Big|\int_{B_\rho(z_\tau)\cap\Omega}\Big(A\big(x,a_1(x)\big)-A\big(x,a_2(x)\big)\Big)\nabla u^{loc}_1\cdot \nabla u^{loc}_2\,dx\Big|\nonumber\\
    -&\int_{\Omega\backslash B_\rho(z_\tau)}\left|A\big(x,a_1(x)\big)-A\big(x,a_2(x)\big)\right| |\nabla u^{loc}_1| |\nabla u^{loc}_2|\,dx,  
\end{align}
hence, noticing that the integral on the right-hand side of \eqref{I1 der} over $\Omega\backslash B_\rho(z_\tau)$ is bounded above by a positive constant $C$ depending on the \textit{a-priori} data, we obtain
\begin{align}
    \Vert\Lambda_{a_1}^\Sigma-\Lambda_{a_2}^\Sigma\Vert_\ast &\Vert u^{loc}_1\Vert_{H^{\frac{1}{2}}_{00}(\Sigma)}\Vert \overline{u^{loc}_2}\Vert_{H^{\frac{1}{2}}_{00}(\Sigma)}\nonumber\\
        \geq&\Big|\int_{B_\rho(z_\tau)\cap\Omega}\Big(A\big(x,a_1(x)\big)-A\big(x,a_2(x)\big)\Big)\nabla u^{loc}_1\cdot \nabla u^{loc}_2\,dx\Big|-C.     \end{align}
By assumption \eqref{eqn: A C norm} and the Lagrange Theorem, for any $x\in \Omega\cap B_\rho(z_\tau)$, there exists $s(x)\in(0,1)$ such that
\begin{equation}
    A\big(x,a_1(x)\big)-A\big(x,a_2(x)\big)=\frac{\partial A(x,t)}{\partial t}\Big|_{t=c(x)}\big(a_1(x)-a_2(x)\big),
\end{equation}
where $c(x)=a_1(x)+s(x)\big(a_2(x)-a_1(x)\big)$. Without loss of generality we assume that $a_1(x)-a_2(x)>0$, for a.e. $x\in \Omega\cap B_\rho(z_\tau)$. Therefore, we have
\begin{align}
    \label{eqn: I2.1}
    \big\Vert\Lambda_{a_1}^\Sigma-\Lambda_{a_2}^\Sigma&\big\Vert_\ast\Vert u^{loc}_1\Vert_{H^{\frac{1}{2}}_{00}(\Sigma)}\Vert \overline{u^{loc}_2}\Vert_{H^{\frac{1}{2}}_{00}(\Sigma)}\nonumber\\
        \geq&\Big|\int_{B_\rho(z_\tau)\cap\Omega}\big(a_1(x)-a_2(x)\big)\frac{\partial A(x,t)}{\partial t}\Big|_{t=c(x)}\nabla u^{loc}_1\cdot \nabla u^{loc}_2\,dx\Big|-C\nonumber\\
        \geq& \int_{B_\rho(z_\tau)\cap\Omega}\big(a_1(x)-a_2(x)\big)\Re\bigg\{\frac{\partial A(x,t)}{\partial t}\Big|_{t=c(x)}\nabla u^{loc}_1\cdot \nabla u^{loc}_2\bigg\}\,dx-C,
\end{align}
where $\Re(z)$ denotes the real part of $z\in\mathbb{C}$. Possibly reducing $\tau$, such that $0<\tau\leq\min\left\{\tau_0,\: \frac{\eta}{8}.\:\frac{\rho}{2}, \frac{r_1}{2}\right\}$, where $r_1$ is the positive constant introduced in Lemma \ref{Lem 4.1}, we estimate the remaining integral appearing on the right hand side of \eqref{eqn: I2.1} from below, by showing that
\begin{equation}
    \label{eqn: re bound}
    \Re\bigg\{\frac{\partial A(x,t)}{\partial t}\Big|_{t=c(x)}\nabla u_1^{loc}\cdot \nabla u_2^{loc}\bigg\}\geq C|x-z_\tau|^{2-2n-2m},\quad\textrm{ for a.e.}\quad x\in\Omega\cap B_\rho(z_\tau).
\end{equation}
In fact, we observe that by \eqref{eqn: mono} we have
\begin{align}
    \Re\bigg\{\frac{\partial A(x,t)}{\partial t}&\Big|_{t=c(x)}\nabla u_1^{loc}\cdot \nabla u_2^{loc}\bigg\}\nonumber\\
    =& \Re\bigg\{\frac{\partial A(x,t)}{\partial t}\Big|_{t=c(x)}\nabla u_1^{loc}\!\cdot\! \nabla u_1^{loc}+\frac{\partial A(x,t)}{\partial t}\Big|_{t=c(x)}\nabla u_1^{loc}\!\cdot\! (\nabla u_2^{loc}\!-\!\nabla u_1^{loc})\bigg\}\nonumber\\
    \geq& \Re\bigg\{\frac{\partial A(x,t)}{\partial t}\Big|_{t=c(x)}\nabla u_1^{loc}\!\cdot\! \nabla u_1^{loc}\bigg\}-\bigg|\frac{\partial A(x,t)}{\partial t}\Big|_{t=c(x)}\nabla u_1^{loc}\!\cdot\! (\nabla u_2^{loc}\!-\!\nabla u_1^{loc})\bigg|\nonumber\\
    \geq & E^{-1}|\nabla u_1^{loc}|^2-C|\nabla u_1^{loc}||\nabla u_2^{loc}-\nabla u_1^{loc}|.
\end{align}
It is a straightforward calculation to show that
\begin{equation}
    |\nabla u_i^{loc}|\leq C|x-z_\tau|^{1-n-m},\quad \textrm{for any}\quad x\in \Omega.
\end{equation}
By Lemma \ref{Lem 4.1}, for any $x\in B_{\rho}(z_{\tau})\cap\Omega$, we have
\begin{equation}
    \Re\bigg\{\frac{\partial A(x,t)}{\partial t}\Big|_{t=c(x)}\nabla u_1^{loc}\cdot \nabla u_2^{loc}\bigg\}\geq C|x-z_\tau|^{1-n-m}\Big\{|x-z_\tau|^{1-n-m}-|\nabla u_2^{loc}-\nabla u_1^{loc}|\Big\}
\end{equation}
and \eqref{eqn: u sing}, \eqref{eqn: sing R1} lead to
\begin{align}
    |\nabla u_1^{loc}-\nabla u_2^{loc}|\leq & C\Big\{ |a_1(x^0)-a_2(x^0)|^\beta|x-z_\tau|^{1-n-m}+\tau^\beta|x-z_\tau|^{1-n-m}\nonumber\\
    &+|x-z_\tau|^{1-n-m+\alpha}\Big\}.
\end{align}
Recalling $|x-z_\tau|>C\tau$, for any $x\in\Omega\cap B_\rho(z_\tau)$, and $\alpha<\beta$, we obtain 
\begin{equation}
    |\nabla u_1^{loc}-\nabla u_2^{loc}|\leq C\Big\{|a_1(x^0)-a_2(x^0)|^\beta|x-z_\tau|^{1-n-m}+|x-z_\tau|^{1-n-m+\alpha}\Big\}.
\end{equation}
Hence, for a.e. $x\in B_\rho(z_\tau)\cap\Omega$, we have
\begin{equation}
    \Re\bigg\{\!\frac{\partial A(x,t)}{\partial t}\Big|_{t=c(x)}\nabla u_1^{loc}\cdot \nabla u_2^{loc}\!\bigg\}\geq C|x-z_\tau|^{2-2n-2m}\Big\{\!1-|a_1(x^0)-a_2(x^0)|^\beta-|x-z_\tau|^\alpha\!\Big\}.
\end{equation}
From \eqref{eqn: pre result} in Theorem \ref{thm 1}, we have
\begin{equation}
    |a_1(x^0)-a_2(x^0)|\leq C \big\Vert\Lambda_{a_1}^\Sigma-\Lambda_{a_2}^\Sigma\big\Vert_\ast,
\end{equation}
and without loss of generality, we can assume that
\begin{equation}
    \big\Vert\Lambda_{a_1}^\Sigma-\Lambda_{a_2}^\Sigma\big\Vert_\ast\leq \frac{1}{2C},
\end{equation}
which leads to 
\begin{equation}
    \Re\bigg\{\frac{\partial A(x,t)}{\partial t}\Big|_{t=c(x)}\nabla u_1^{loc}\cdot \nabla u_2^{loc}\bigg\}\geq C|x-z_\tau|^{2-2n-2m}\bigg\{\frac{1}{2}-|x-z_\tau|^\alpha\bigg\}.
\end{equation}
By reducing $\rho$ so that $|x-z_\tau|^\alpha<\frac{1}{2}$, we obtain \eqref{eqn: re bound}, which, combined with \eqref{eqn: I2.1} leads to
\begin{align}
    \label{eqn: I2.2}
    \big\Vert\Lambda_{a_1}^\Sigma-\Lambda_{a_2}^\Sigma\big\Vert_\ast&\Vert u_1^{loc}\Vert_{H^{\frac{1}{2}}_{00}(\Sigma)}\Vert \overline{u_2^{loc}}\Vert_{H^{\frac{1}{2}}_{00}(\Sigma)}\nonumber\\
    \geq& C\int_{B_\rho(z_\tau)\cap\Omega}\big(a_1(x)-a_2(x)\big)|x-z_\tau|^{2-2n-2m}\,dx-C.
\end{align}
Observing that for some positive constant $s>0$, any $x\in \Omega\cap B_\rho(z_\tau)$ can be uniquely written as $x=y-s\widetilde{\nu}$, with $y\in\partial\Omega$, and
\begin{equation}
    \label{eqn: s bound}
   Cs\leq d(x,\partial\Omega)\leq s, 
\end{equation}
by Taylor's theorem, we have
\begin{equation}
    \bigg|(a_1-a_2)(x)-\underset{j=0}{\overset{h}{\sum}}\frac{\partial^j}{\partial\widetilde{\nu}^j}(a_1-a_2)(y)\frac{(-s)^j}{j!}\bigg|\leq Cs^{h+1}.
\end{equation}
For any $x\in \Omega\cap B_\rho(z_\tau)$, we write
\begin{equation}
    \label{eqn: I2.3}
    (a_1-a_2)(x)\geq s^h\bigg\Vert\frac{\partial^h}{\partial\widetilde{\nu}^h}(a_1-a_2)\bigg\Vert_{L^\infty(\overline{\Sigma}_\eta)}-\underset{j=0}{\overset{h-1}{\sum}}s^j\bigg\Vert\frac{\partial^j}{\partial\widetilde{\nu}^j}(a_1-a_2)\bigg\Vert_{L^\infty(\overline{\Sigma}_\eta)}-Cs^h|x-x^0|^\alpha.
\end{equation}
Combining together \eqref{eqn: s bound}, \eqref{eqn: I2.3} with \eqref{eqn: I2.2} leads to 
\begin{align}
    \big\Vert\Lambda_{a_1}^\Sigma-\Lambda_{a_2}^\Sigma&\big\Vert_\ast\Vert u_1^{loc}\Vert_{H^{\frac{1}{2}}_{00}(\Sigma)}\Vert \overline{u_2^{loc}}\Vert_{H^{\frac{1}{2}}_{00}(\Sigma)}\nonumber\\
    \geq & C\Bigg\{\bigg\Vert\frac{\partial^h}{\partial\widetilde{\nu}^h}(a_1-a_2)\bigg\Vert_{L^\infty(\overline{\Sigma}_\eta)}\int_{B_\rho(z_\tau)\cap\Omega}|x-z_\tau|^{2-2n-2m}d(x,\partial\Omega)^h\, dx\nonumber\\
    &-\underset{j=0}{\overset{h-1}{\sum}}\bigg\Vert\frac{\partial^j}{\partial\widetilde{\nu}^j}(a_1-a_2)\bigg\Vert_{L^\infty(\overline{\Sigma}_\eta)}\int_{B_\rho(z_\tau)\cap\Omega}|x-z_\tau|^{2-2n-2m}d(x,\partial\Omega)^j\, dx\nonumber\\
    &-\int_{B_\rho(z_\tau)\cap\Omega}|x-x^0|^\alpha|x-z_\tau|^{2-2n-2m}d(x,\partial\Omega)^h\,dx\Bigg\}-C.
\end{align}
Recalling that $C|x-z_\tau|\leq d(x,\partial\Omega)\leq|x-z_\tau|$ for any $x\in B_\rho(z_\tau)\cap\Omega$, we have
\begin{align}
    \label{eqn: I2.4}
    \bigg\Vert\frac{\partial^h}{\partial\widetilde{\nu}^h}(a_1-&a_2)\bigg\Vert_{L^\infty(\overline{\Sigma}_\eta)}\int_{B_\rho(z_\tau)\cap\Omega}|x-z_\tau|^{2-2n-2m+h}\, dx\nonumber\\
    \leq & C\Bigg\{\underset{j=0}{\overset{h-1}{\sum}}\bigg\Vert\frac{\partial^j}{\partial\widetilde{\nu}^j}(a_1-a_2)\bigg\Vert_{L^\infty(\overline{\Sigma}_\eta)}\int_{B_\rho(z_\tau)\cap\Omega}|x-z_\tau|^{2-2n-2m+j}\, dx\nonumber\\
    &+\int_{B_\rho(z_\tau)\cap\Omega}|x-x^0|^\alpha|x-z_\tau|^{2-2n-2m+h}\,dx\nonumber\\
    &+\big\Vert\Lambda_{a_1}^\Sigma-\Lambda_{a_2}^\Sigma\big\Vert_\ast\Vert u_1^{loc}\Vert_{H^{\frac{1}{2}}_{00}(\Sigma)}\Vert \overline{u_2^{loc}}\Vert_{H^{\frac{1}{2}}_{00}(\Sigma)}\Bigg\}+C.
\end{align}
We can estimate the integrals on the right hand side of \eqref{eqn: I2.4} by observing that \newline $B_\rho(z_\tau)\cap\Omega\subset\{C\tau\leq|x-z_\tau|\leq2\tau_0\}$, therefore we obtain
\begin{align}
    \label{eqn: I2.5}
   \bigg\Vert\frac{\partial^h}{\partial\widetilde{\nu}^h}(a_1-&a_2)\bigg\Vert_{L^\infty(\overline{\Sigma}_\eta)}\int_{B_\rho(z_\tau)\cap\Omega}|x-z_\tau|^{2-2n-2m+h}\, dx\nonumber\\ 
   \leq & C\Bigg\{\underset{j=0}{\overset{h-1}{\sum}}\bigg\Vert\frac{\partial^j}{\partial\widetilde{\nu}^j}(a_1-a_2)\bigg\Vert_{L^\infty(\overline{\Sigma}_\eta)}\tau^{2-n-2m+j}+ \tau^{2-n-2m+h+\alpha}\nonumber\\
   &+\big\Vert\Lambda_{a_1}^\Sigma-\Lambda_{a_2}^\Sigma\big\Vert_\ast\Vert u_1^{loc}\Vert_{H^{\frac{1}{2}}_{00}(\Sigma)}\Vert \overline{u_2^{loc}}\Vert_{H^{\frac{1}{2}}_{00}(\Sigma)}\Bigg\}+C.
\end{align}
The integral on the left hand side of \eqref{eqn: I2.5} can be estimated from below as 
\begin{equation}
    \label{eqn: lower}
    \int_{B_\rho(z_\tau)\cap\Omega}|x-z_\tau|^{2-2n-2m+h}\, dx\geq C\tau^{2-n-2m+h},
\end{equation}
(see \cite[p.~66]{MSalo}).
Recalling that by our induction argument, we have
\begin{equation}
    \label{eqn: ind re}
    \bigg\Vert\frac{\partial^j}{\partial\widetilde{\nu}^j}(a_1-a_2)\bigg\Vert_{L^\infty(\overline{\Sigma}_\eta)}\leq C\Vert\Lambda_{a_1}^\Sigma-\Lambda_{a_2}^\Sigma\Vert_\ast^{\delta_j},\quad \textrm{for any }j,\quad j\leq h-1,
\end{equation}
by combining \eqref{eqn: lower} and \eqref{eqn: ind re} and the $H^{\frac{1}{2}}_{00}(\Sigma)$-norms of $u_1^{loc}$ and $u_2^{loc}$ (see \cite{A, A-G1}), we get
\begin{align}
     \bigg\Vert\frac{\partial^h}{\partial\widetilde{\nu}^h}(a_1-a_2)\bigg\Vert_{L^\infty(\overline{\Sigma}_\eta)}\!\!\tau^{2-n-2m+h}\leq& C\bigg\{\underset{j=0}{\overset{h-1}{\sum}}\big\Vert\Lambda_{a_1}^\Sigma-\Lambda_{a_2}^\Sigma\big\Vert_\ast^{\delta_j}\tau^{2-n-2m+j}+\tau^{2-n-2m+h+\alpha}\nonumber\\
     &+\tau^{2-n-2m}\big\Vert\Lambda_{a_1}^\Sigma-\Lambda_{a_2}^\Sigma\big\Vert_\ast\bigg\}+C.
\end{align}
Hence
\begin{equation}
    \label{eqn: I2.6}
    \bigg\Vert\frac{\partial^h}{\partial\widetilde{\nu}^h}(a_1-a_2)\bigg\Vert_{L^\infty(\overline{\Sigma}_\eta)}\leq C\Big\{\big\Vert\Lambda_{a_1}^\Sigma-\Lambda_{a_2}^\Sigma\big\Vert_\ast^{\delta_{h-1}}\tau^{-h}+\tau^\alpha+\tau^{n+2m-2-h}\Big\}.
\end{equation}
By choosing $m$ sufficiently large, and optimising \eqref{eqn: I2.6} with respect to $\tau$, we obtain
\begin{equation}
    \label{eqn: pre der result}
    \Big\Vert\frac{\partial^h}{\partial\widetilde{\nu}^h}(a_1-a_2)\Big\Vert_{L^\infty(\overline{\Sigma}_\eta)}\leq C \Vert\Lambda_{a_1}^\Sigma-\Lambda_{a_2}^\Sigma\Vert_\ast^{\delta_h}.
\end{equation}
An iterative use of the interpolation inequality
\begin{equation}
    \label{eqn: inter ineq}
    \Vert Df\Vert_{L^\infty(\overline{\Sigma}_\eta)}\leq C\bigg\{\bigg\Vert\frac{\partial}{\partial\widetilde{\nu}}f\bigg\Vert_{L^\infty(\overline{\Sigma}_\eta)}+\Vert f\Vert_{L^\infty(\overline{\Sigma}_\eta)}^{\frac{\alpha}{1+\alpha}}+\Vert f\Vert_{C^{1,\alpha}(\overline{U}_\eta)}^{\frac{1}{1+\alpha}}\bigg\},
\end{equation}
which holds true for every $f\in C^{1,\alpha}(\overline{U}_\eta)$, together with \eqref{eqn: pre der result}, leads to
\begin{equation}
    \label{eqn: pre der result 2}
    \Big\Vert D^h(a_1-a_2)\Big\Vert_{L^\infty(\overline{\Sigma}_\eta)}\leq C \big\Vert\Lambda_{a_1}^\Sigma-\Lambda_{a_2}^\Sigma\big\Vert_\ast^{\delta_h}.
\end{equation}
For every multi-index $\zeta$, $|\zeta|\leq h$, we have
\begin{equation}
    D^\zeta A\big(x,a(x)\big)=\underset{\gamma+\delta\leq\zeta}{\sum}P_{\gamma\delta}\big(a(x),...,D^{|\delta|}a(x)\big)D_x^\gamma D_t^\delta A\big(x,t\big)\Big|_{t=a(x)},
\end{equation}
where $P_{\gamma\delta}$ is a polynomial, and recalling $A(x,a_i(x))\in C^{h,\alpha}(\overline{\Omega}_\eta)$ for $i=1,2$, we obtain
\begin{equation}
    \big\Vert D^h(A(x,a_1(x))-A(x,a_2(x))\big\Vert_{L^\infty(\overline{\Sigma}_\eta)}\leq C\Vert a_1(x)-a_2(x)\Vert^\alpha_{C^h(\overline{\Omega}_\eta)},
\end{equation}
which, combined with \eqref{eqn: pre der result 2} concludes the proof.
\end{proof}
\bibliographystyle{plain}
\bibliography{references}
\end{document}